\newtheorem{theorem}{Theorem}[section]
\newtheorem{proposition}[theorem]{Proposition}
\theoremstyle{definition}
\newtheorem{question}[theorem]{Question}
\newtheorem{remark}[theorem]{Remark}
\DeclareMathOperator{\B}{B}
\DeclareMathOperator{\Cl}{Cl}
\DeclareMathOperator{\Div}{div}
\DeclareMathOperator{\HH}{H}
\DeclareMathOperator{\GG}{G}
\DeclareMathOperator{\Pic}{Pic}
\DeclareMathOperator{\Proj}{Proj}
\DeclareMathOperator{\Spec}{Spec}
\title[Brauer Group of an Affine Double Plane]{The Brauer Group of
  an Affine Double Plane Associated to a Hyperelliptic Curve}
\author{Timothy J. Ford}%
\address{Department of Mathematics, Florida
  Atlantic University, Boca Raton, Florida 33431}%
\date{\today}%
\email{\tt ford@fau.edu}%
\thanks{Preliminary Report}%
\begin{document}

\begin{abstract}
  For an affine double plane defined by an equation of the form $z^2 =
  f$ we study the divisor class group and the Brauer group.  Two cases
  are considered. In the first case, $f$ is a product of $n$ linear
  forms in $k[x,y]$ and $X$ is birational to a ruled surface $\mathbb
  P^1 \times C$, where $C$ is rational if $n$ is odd and hyperelliptic
  if $n$ is even.  In the second case, $f$ is the equation of an
  affine hyperelliptic curve.  On the open set where the cover is
  unramified, we compute the groups of divisor classes, the Brauer
  groups, the relative Brauer group, as well as all of the terms in
  the exact sequence of Chase, Harrison and Rosenberg.
\end{abstract}

\subjclass[2010]{16K50; Secondary
  14F22, 14J26, 14C20}
\keywords{Brauer group, Class group, algebraic surface}

\maketitle

\section{Introduction}
\label{sec:1}

\subsection{Statement of main results}
\label{sec:1.1}
Throughout $k$ is an algebraically closed field of characteristic
different from $2$.  This article is concerned with the study of
divisor classes and algebra classes on a double cover $X$ of the
affine plane $\mathbb A^2$.  The surface $X$ which we investigate is a
hypersurface in $\mathbb A^3$ which is defined by an equation of the
form $z^2 =f$.

In Section~\ref{sec:2}, we investigate the surface $X$ which is
defined by an equation of the form $z^2 = f_1\dotsm f_n$, where each
$f_i$ is a linear form in $k[x,y]$.  The surface $X$ is birational to
a ruled surface $\mathbb P^1 \times C$, where $C$ is rational if $n$
is odd and hyperelliptic if $n$ is even. There is an isolated singular
point at the origin.

In Section~\ref{sec:4}, the surface $X$ is defined by an equation of
the form $z^2 = f$, where $f = y^2 - p(x)$.  The zero set of $f$ in
$\mathbb A^2$ is denoted $F$. If $p(x)$ has degree at least three and
is square-free, then $F$ is an affine hyperelliptic curve.  The
surface $X$ is rational, normal, and any singularity on $X$ is a
rational double point of type $A_n$.

The invariants of the surface $X$ which we compute include the group
of Weil divisor classes $\Cl(\cdot)$, the Picard group $\Pic(\cdot)$,
and the Brauer group $\B(\cdot)$.  Let $K \rightarrow L$ be the
quadratic Galois extension with group $G$ of the rational function
fields of $\mathbb A^2$ and $X$.  On an open affine subset, the double
cover $X \rightarrow \mathbb A^2$ is unramified, let $R \rightarrow S$
be the corresponding quadratic Galois extension of commutative rings.
We are especially interested in the relative Brauer group $\B(S/R)$
and the image of $\B(R) \rightarrow \B(L)$.  For the Galois extension
$R\rightarrow S$, the terms in the Chase, Harrison, Rosenberg
cohomology sequence \cite[Corollary~5.5]{CHR:GtGccr} are computed.  It
is shown that the relative Brauer group $\B(S/R)$ is isomorphic to
$\HH^1(G,\Cl(X))$. In \cite{F:rBgadp} this relationship is
investigated further, where it is demonstrated that every element of
$\B(S/R)$ is represented by a generalized crossed product algebra of
degree two over $k[x,y]$. 

To simplify the exposition, in Section~\ref{sec:3} we include some
computations involving divisors on a hyperelliptic curve.

\subsection{Background material}
\label{sec:1.2}
We recommend \cite{M:EC} as a source for all unexplained notation and
terminology.  We tacitly assume all groups and sequences of groups are
`modulo the characteristic of $k$'.  By $\mu_d$ we denote the kernel
of the $d$th power map $k^\ast \rightarrow k^\ast$. By $\mu$ we denote
$\cup_d \mu_d$. There is an isomorphism $\mathbb Q/\mathbb Z \cong
\mu$, which is non-canonical and when convenient we use the two groups
interchangeably. Cohomology and sheaves are for the \'etale topology,
except where we use group cohomology.  For instance, $\mathcal O$
denotes the sheaf of rings and $\mathbb G_m$ denotes the sheaf of
units.  If $X$ is a variety over $k$, then the multiplicative group of
units on $X$ is $\HH^0(X, \mathbb G_m)$ which is also denoted
$\mathcal O^\ast(X)$, or simply $X^\ast$.  The Picard group
\cite[Proposition~III.4.9]{M:EC}, $\Pic(X)$, is given by $\HH^1(X,
\mathbb G_m)$. The torsion subgroup of $\HH^2(X, \mathbb G_m)$ is the
cohomological Brauer group \cite{G:GBI}, denoted $\B^\prime(X)$.  The
Brauer group $\B(X)$ of classes of $\mathcal O(X)$-Azumaya algebras
embeds into $\B'(X)$ in a natural way \cite[(2.1), p.  51]{G:GBI} and
for all varieties considered in this article $\B(X) = \B'(X)$
\cite{H:WBr}.  By Kummer theory, the $d$th power map
\begin{equation}
  \label{eq:101}
  1 \rightarrow \mu_d \rightarrow \mathbb G_m \xrightarrow{d} \mathbb G_m
  \rightarrow 1
\end{equation}
is an exact sequence of sheaves on $X$.  For any abelian group $M$ and
integer $d$, by ${_dM}$ we denote the subgroup of $M$ annihilated by
$d$.  The long exact sequence of cohomology associated to
\eqref{eq:101} breaks up into short exact sequences which in degrees
one and two are
\begin{gather}
  \label{eq:102}
  1 \rightarrow X^\ast / X^{\ast d} \rightarrow \HH^1(X,\mu_d)
  \rightarrow {_d\Pic{X}} \rightarrow 0\\
  \label{eq:103}
  0 \rightarrow \Pic{X} \otimes \mathbb Z/d \rightarrow \HH^2(X,\mu_d)
  \rightarrow {_d\B(X)} \rightarrow 0
\end{gather}
The group $\HH^1(X,\mu_d)$ classifies the Galois coverings  $Y
\rightarrow X$ with cyclic Galois group
$\mathbb Z/d$ \cite[pp. 125--127]{M:EC}.  We will utilize the following
form of the exact sequence of Chase, Harrison and Rosenberg
\cite[Corollary~5.5]{CHR:GtGccr}.
  If $Y \rightarrow X$ is a Galois covering with finite cyclic group
  $G$, and $\Pic{X} = 0$, then there is an exact sequence of abelian
  groups.
  \begin{equation}
    \label{eq:104}
    0 \rightarrow \left(\Pic{Y}\right)^G \rightarrow \HH^2(G, Y^\ast)
    \xrightarrow{\alpha_4} \B(Y/X) \xrightarrow{\alpha_5}
    \HH^{1}(G,\Pic{Y}) \rightarrow 0\ldotp
  \end{equation}
If $X$ is a nonsingular integral affine surface over $k$ with field of
rational functions $K=K(X)$, then there is an exact sequence
\begin{equation}
  \label{eq:105}
  0 \to \B(X) \to \B(K) \xrightarrow{a} \bigoplus_{C \in
    X_1}\HH^1(K(C),\mathbb Q/\mathbb Z) \xrightarrow{r} \bigoplus_{p
    \in X_2} \mu(-1) \xrightarrow{S} 0
\end{equation}
The first summation is over all irreducible curves $C$ on $X$, the
second summation is over all closed points $p$ on $X$.  Sequence
\eqref{eq:105} is obtained by combining sequences (3.1) and (3.2) of
\cite[p.~86]{AM:See}.  The map $a$ of \eqref{eq:105} is called the {\em
  ramification map}.  Let $\Lambda$ be a central $K$-division algebra
which represents a class in $\B(K)$. The curves $C \in X_1$ for which
$a([\Lambda])$ is non-zero make up the so-called {\em ramification
  divisor\/} of $\Lambda$ on $X$.  For $\alpha$, $\beta$ in $K^\ast$,
by $\Lambda = (\alpha,\beta)_d$ we denote the symbol algebra over $K$
of degree $d$.  Recall that $\Lambda$ is the associative $K$-algebra
generated by two elements, $u$ and $v$ subject to the relations $u^d =
\alpha$, $v^d=\beta$, $uv = \zeta vu$, where $\zeta$ is a fixed
primitive $d$th root of unity.  On the Brauer class containing
$(\alpha,\beta)_d$, the ramification map $a$ agrees with the so-called
{\em tame symbol}.  Let $C$ be a prime divisor on $X$.  Then
$\mathcal O_{X,C}$ is a discrete valuation ring with valuation denoted
by $\nu_C$. The residue field is $K(C)$, the field of rational
functions on $C$.  The ramification of $(\alpha,\beta)_d$ along $C$ is
the cyclic Galois extension of $K(C)$ defined by adjoining the $d$th
root of
\begin{equation}
  \label{eq:106}
  \alpha^{\nu_C(\beta)}
  \beta^{-\nu_C(\alpha)}\ldotp  
\end{equation}
If $(\alpha,\beta)_d$ has non-trivial ramification along $C$, then $C$
is a prime divisor of $(\alpha)$ or $(\beta)$.

In the usual way, we consider the affine plane $\mathbb A^2$ as an
open subset of the projective plane $\mathbb P^2$.  Let $F_0, F_1,
\dots, F_n$ be distinct curves in $\mathbb P^2$ each of which is
simply connected, that is, $\HH^1(F_i,\mathbb Q/\mathbb Z) = 0$.  Let
$Z = \{ F_i \cap F_j \mid i \neq j\}$, which is a subset of the
singular locus of $F = F_0 + F_1 + \dots+ F_n$.  Because each $F_i$ is
simply connected, if there is a singularity of $F$ which is not in
$Z$, then at that point $F$ is geometrically unibranched.  Decompose
$Z$ into irreducible components $Z_1 + \dots + Z_s$.  The {\em
  graph\/} of $F$ is denoted $\Gamma$ and is bipartite with edges
$\{F_0, \dots, F_n\} \cup \{Z_1, \dots, Z_s\}$. An edge connects $F_i$
to $Z_j$ if and only if $Z_i \subseteq F_i$.  So $\Gamma$ is a
connected graph with $n+1+s$ vertices. Let $e$ denote the number of
edges. Then $\HH_1(\Gamma, \mathbb Z/d) \cong {(\mathbb
  Z/d)}^{(r)}$, where $r = e - (n+1+s) + 1$.  We will utilize the
following special version of \cite[Theorem~4]{F:Bgk}.

\begin{theorem}
  \label{theorem:1.2}
  Let $f_1, \dots, f_n$ be irreducible polynomials in $k[x_1, x_2]$
  defining $n$ distinct curves $F_1$, \dots, $F_n$ in the projective
  plane $\mathbb P^2 = \Proj{k[x_0, x_1, x_2]}$.  Let $F_0 = Z(x_0)$
  be the line at infinity and $\Gamma$ the graph of $F = F_0+ F_1+
  \dots + F_n$.  If $\HH^1(F_i,\mathbb Q/\mathbb Z) = 0$ for each $i$,
  and $R = k[x_1, x_2][f_1^{-1}, \dots, f^{-1}_n]$, then for each $d
  \geq 2$, ${_d\B(R)} \cong \HH_1(\Gamma, \mathbb Z/d)$.  If $F_i$
  and $F_j$ intersect at $P_{ij}$ with local intersection multiplicity
  $\mu_{ij}$, then near the vertex $P_{ij}$, the cycle in $\Gamma$
  corresponding to $(f_i,f_j)_d$ looks like $F_i
  \xrightarrow{\mu_{ij}} P_{ij} \xrightarrow{-\mu_{ij}} F_j$.
\end{theorem}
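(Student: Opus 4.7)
The plan is to compute ${}_d\B(R)$ via the ramification sequence \eqref{eq:105} applied to $\mathbb P^2$ and to match the outcome with $\HH_1(\Gamma,\mathbb Z/d)$. Since $R = \mathcal O(U)$ for $U := \mathbb A^2 \setminus (F_1 \cup \dots \cup F_n) = \mathbb P^2 \setminus F$ and $\B(\mathbb P^2) = 0$, the $\mathbb P^2$-version of \eqref{eq:105} realizes ${}_d\B(K)$ as the kernel of the second residue map $r : \bigoplus_C \HH^1(K(C),\mathbb Q/\mathbb Z) \to \bigoplus_p \mu^{-1}$. The subgroup ${}_d\B(R)$ is then cut out by requiring the ramification divisor to be supported on $F_0, F_1, \dots, F_n$, since ramification on any curve meeting $U$ would obstruct membership in $\B(R)$. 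A class in ${}_d\B(R)$ is thus recorded by a tuple $(\lambda_i)_{i=0}^n$ with $\lambda_i \in \HH^1(K(F_i),\mathbb Z/d)$ satisfying $r((\lambda_i)) = 0$ pointwise.

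The hypothesis $\HH^1(F_i,\mathbb Q/\mathbb Z) = 0$ together with the irreducibility of $F_i$ forces the normalization $\tilde F_i$ to equal $\mathbb P^1$ and $F_i$ to be geometrically unibranched at every point, so each $p \in F_i$ corresponds to a unique place of $K(F_i)$. Kummer theory for $\mathbb P^1$ then identifies
\[
\HH^1(K(F_i),\mathbb Z/d) \cong \bigl\{ (n_p)_{p \in F_i} \in \bigoplus_{p \in F_i} \mathbb Z/d \,:\, \textstyle\sum_p n_p \equiv 0 \pmod d \bigr\}.
\]
The constraint $r = 0$ at a point $p \notin Z$ (which lies on at most one $F_i$) forces $n_p = 0$, so the surviving ramification data is supported exactly on pairs $(F_i, Z_j)$ with $Z_j \in F_i$ --- precisely the edges of $\Gamma$.

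The resulting coefficients $n_{i,j} \in \mathbb Z/d$ satisfy (a) $\sum_j n_{i,j} = 0$ at each curve vertex $F_i$ (the sum-zero relation in the display above) and (b) $\sum_i n_{i,j} = 0$ at each point vertex $Z_j$ (from $r=0$ at $Z_j$), which are exactly the conditions $\partial c = 0$ for a 1-chain $c$ in the simplicial chain complex of $\Gamma$. Since $\Gamma$ has no 2-cells, $\HH_1(\Gamma,\mathbb Z/d) = \ker\partial$, giving the claimed isomorphism. For the concluding assertion the tame symbol formula \eqref{eq:106} applied to $(f_i,f_j)_d$ yields ramification $\sqrt[d]{f_j^{-1}|_{F_i}}$ at $F_i$ with residue $-\mu_{ij}$ at $P_{ij}$, and $\sqrt[d]{f_i|_{F_j}}$ at $F_j$ with residue $+\mu_{ij}$ there, matching the stated cycle $F_i \to P_{ij} \to F_j$ up to a sign convention on bipartite edges.

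The principal difficulty is the implication in the second paragraph that $\HH^1(F_i,\mathbb Q/\mathbb Z) = 0$ forces both $\tilde F_i = \mathbb P^1$ and geometric unibranchedness at every point. One must compare $\HH^1(F_i)$ with $\HH^1(\tilde F_i)$ via a Mayer--Vietoris-type argument in which each multibranched singularity of $F_i$ contributes a nonzero quotient, so the vanishing hypothesis simultaneously rules such singularities out and forces $\tilde F_i$ cohomologically simply connected, hence $\mathbb P^1$. Without this step, the sum-zero description of $\HH^1(K(F_i),\mathbb Z/d)$ above would fail and the correspondence with $\HH_1(\Gamma,\mathbb Z/d)$ would not go through.
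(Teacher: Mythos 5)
Your argument is correct and follows essentially the same route as the proof the paper points to: the paper's own ``proof'' of Theorem~\ref{theorem:1.2} is only a citation of \cite{F:Bgk} and \cite{F:Bgl}, and the method there is precisely yours --- realize ${_d\B(R)}$ inside the Artin--Mumford sequence for $\mathbb P^2$ as the classes with ramification divisor supported on $F$, use $\HH^1(F_i,\mathbb Q/\mathbb Z)=0$ to force $\tilde F_i\cong\mathbb P^1$ and geometric unibranchedness so that the surviving residue data is a $\mathbb Z/d$-weighting of the edges of $\Gamma$, and identify the degree-zero condition on each $F_i$ together with the vanishing of $r$ at each $Z_j$ with $\partial c=0$. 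The sign discrepancy you flag at the end is only the orientation convention on the bipartite edges (equivalently $(f_i,f_j)_d$ versus $(f_j,f_i)_d$) and is harmless.
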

\begin{proof}
  See \cite[Theorem~5]{F:Bgk} and \cite[\S~2]{F:Bgl}.  If moreover we
  assume each $F_i$ is a line, the proof of \cite[Theorem~4]{F:Bgk}
  shows that the Brauer group ${_\nu\B(R)}$ is generated by the set of
  symbol algebras $\{(f_i,f_j)_\nu \mid 1 \leq i < j < n\}$ over $R$
  and the only relations that arise are when three of the lines $F_i$
  meet at a common point of $\mathbb P^2$.
\end{proof}

\section{A ruled surface with an isolated singularity}
\label{sec:2}
Let $A = k[x,y]$ and $K = k(x,y)$ the quotient field of $A$.  Let $f =
f_1f_2f_3\dotsm f_n \in k[x,y]$ where $f_1, f_2, f_3, \dots, f_n$ are
linear polynomials, all of the form $f_i = a_i x + b_i y$.  Assume $n
> 2$ and $f$ is square-free in $A$.  Set $T = A[z]/(z^2 - f)$, $R =
A[f^{-1}]$ and $S = T[f^{-1}]$.  It is an exercise
\cite[Exercise~II.6.4]{H:AG} to show that $T$ is an integral domain, a
free $A$-module of rank $2$, and is integrally closed in its quotient
field, $L = K(z)$.  The surface $X=\Spec{T}$ has an isolated
singularity at the maximal ideal $(x,y,z)$. The birational equivalence
classes of such double planes have been classified in
\cite{MR1939687}.

Because $A$ is factorial, it is routine to see that $R^\ast = k^\ast
\times \langle f_1\rangle \dotsm \times \langle f_n\rangle$.  Let
$\sigma$ denote both the $R$-automorphism of $S$ and the
$K$-automorphism of $L$ defined by $z \mapsto -z$. Let $G = \{1,
\sigma\}$ be the group generated by $\sigma$ which is a group of
automorphisms of $T$, $S$ and $L$. Notice that $A =T^G$, $R = S^G$,
and $K = L^G$. Also, $S/R$ and $L/K$ are Galois by
\cite[Example~I.3.4]{M:EC}.  Because $G$ is cyclic, the image of the
homomorphism $\alpha_4$ in \eqref{eq:104} is generated by the symbol
algebras $(f_i,f)_2$ for $i = 1, \dots, n$.  We denote the image of
$\alpha_4$ by $\B^\smallsmile(S/R)$.  By \cite[Theorem~4]{F:Bgk},
${_2\B(R)} \cong (\mathbb Z/2)^{(n-1)}$ is generated by the symbol
algebras $(f_i,f_j)_2$.
Because all of the
polynomials $f_i$ are in the maximal ideal $(x,y)$, for any triple $i
< j < l$, we have the Brauer equivalence $(f_i f_j, f_l)_2 \sim (f_i,
f_j)_2$.
\begin{proposition}
  \label{prop:2.1}
  As above, let $T = k[x,y,z]/(z^2 - f)$, where $f_1, f_2, f_3, \dots,
  f_n$ are linear forms in $k[x,y]$, $n \geq 3$, and $f =
  f_1f_2f_3\dotsm f_n$ is square-free.  Then $T$ satisfies
  \begin{enumerate}[(a)]
  \item $T^\ast = k^\ast$,
  \item $\Pic(T) = 0$, and
  \item $\B(T) = 0$.
  \end{enumerate}
\end{proposition}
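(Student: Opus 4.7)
The plan exploits the fact that $T$ is a positively graded $k$-algebra with $T_0 = k$: assigning $\deg x = \deg y = 2$ and $\deg z = n$ makes $z^2 - f$ homogeneous of degree $2n$, so $T = \bigoplus_{d \ge 0} T_d$ with irrelevant maximal ideal $\mathfrak{m} = (x,y,z)$.  The corresponding $\mathbb{G}_m$-action on $X = \Spec T$ has the origin as its unique fixed point.

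Part (a) is immediate: writing a hypothetical unit $u = u_a + \dots + u_b$ and its inverse $u^{-1} = v_c + \dots + v_d$ as graded sums with $u_a, u_b, v_c, v_d \ne 0$, the lowest and highest components of $u u^{-1} = 1$ force $a = b = c = d = 0$, so $u \in T_0^* = k^*$.

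For (b), I would first compute $\Cl(T)$.  A local computation in the DVR $T_{D_i}$, where $D_i = V(z, f_i)$ is the prime divisor lying over $V(f_i) \subset \mathbb A^2$, shows that $z$ is a uniformizer, yielding $\Div_X(f_i) = 2 D_i$ and $\Div_X(z) = \sum_i D_i$.  Since $A = k[x,y]$ is a UFD and $X \to \mathbb A^2$ is unramified off $V(f)$, prime divisors of $T$ not lying over the branch locus are principal; hence $\Cl(T)$ is generated by $D_1, \dots, D_n$ subject to $2 D_i = 0$ and $\sum_i D_i = 0$.  To pass from $\Cl(T)$ to $\Pic(T) = 0$, let $D = \sum \epsilon_i D_i$ ($\epsilon_i \in \{0,1\}$) be a Cartier representative; since $D$ is $\mathbb{G}_m$-invariant, $\mathcal O_X(D)$ admits a $\mathbb{G}_m$-equivariant structure and a local generator at $\mathfrak{m}$ may be taken homogeneous.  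Clearing a homogeneous denominator (which by (a) lies in $k^*$) produces a homogeneous global generator $g \in T$ with $\Div(g) = D$.  The homogeneous regular functions whose divisor is supported on $\bigcup_i D_i$ are scalar multiples of monomials $z^a f_1^{b_1} \cdots f_n^{b_n}$ with divisor $\sum_i (a + 2 b_i) D_i$; imposing $\epsilon_i \in \{0,1\}$ forces $a \in \{0,1\}$ and $\epsilon_i = a$ for all $i$, so either $D = 0$ or $D = \Div(z)$, both trivial in $\Cl(T)$.

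Part (c) is the main obstacle, because the pullbacks of the symbol algebras $(f_i,f_j)_2 \in \B(R)$ from Theorem~\ref{theorem:1.2} are unramified along each $D_i$ (the tame symbol at $D_i$ equals $f_j^{-2}$, a square), so $\B(U)$ on the smooth locus $U = X \setminus \{0\}$ is in general nonzero and purity is not directly available at the singular origin.  I would prove $\B(T) = 0$ by extending the $\mathbb{G}_m$-equivariance argument of (b): parts (a) and (b) imply that the obstructions to equivariantising an Azumaya $T$-algebra $\Lambda$ (living in $\HH^i(\mathbb{G}_m, T^*)$ and $\HH^i(\mathbb{G}_m, \Pic T)$) vanish, so $\Lambda$ may be given a $\mathbb{G}_m$-equivariant structure.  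Restriction to the fixed point $\{0\}$ then lands in $\B(k) = 0$, and propagating back via the $\mathbb{G}_m$-contraction forces $[\Lambda] = 0$ in $\B(T)$.
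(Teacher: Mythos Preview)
Your approach is essentially the paper's: both arguments rest on the observation that $z^2-f$ is quasi-homogeneous, so $T$ is positively graded with $T_0=k$, and then (a) is immediate while (b) and (c) follow from the graded/$\mathbb G_m$-equivariant structure.  The paper simply cites Hoobler's result on functors of graded rings for (b) and (c), whereas you reconstruct the outline of that argument by hand; your sketch of (a) and (b) is fine, and the equivariantisation-then-restrict-to-the-fixed-point strategy for (c) is exactly the mechanism behind Hoobler's theorem.

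One point worth tightening: in (c) the phrase ``propagating back via the $\mathbb G_m$-contraction'' hides the actual content.  Knowing that $\Lambda\otimes_T k$ is split does not by itself force $[\Lambda]=0$; what makes this work is that once $\Lambda$ is $\mathbb G_m$-equivariant it is a \emph{graded} Azumaya $T$-algebra, and for a positively graded ring with $T_0$ a field one lifts the matrix units of $\Lambda/\mathfrak m\Lambda\cong M_r(k)$ to homogeneous elements of $\Lambda$ using graded Nakayama, producing a global splitting $\Lambda\cong\mathrm{End}_T(P)$ with $P$ graded projective, hence graded free by the same argument you used in (b).  If you spell that out, your proof is complete and self-contained, which the paper's is not.
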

\begin{proof}
  If $n$ is even, we can define a grading on $k[x,y,z]$ by $\deg(x) =
  \deg(y) = 1$ and $\deg(z) = n/2$. Then $z^2 - f$ is a
  quasi-homogeneous (or weighted homogeneous) polynomial and $T$ is a
  graded ring. The subring of $T$ consisting of homogeneous elements
  of degree $0$ is $T_0 = k$ from which we get (a).  By \cite{H:Fgr},
  (b) and (c) follow.  If $n$ is odd, we define a grading on
  $k[x,y,z]$ by $\deg(x) = \deg(y) = 2$ and $\deg(z) = n$. Then $z^2 -
  f$ is a homogeneous element of degree $2n$, and $T$ is a graded ring
  with $T_0 = k$.
\end{proof}
\begin{proposition}
  \label{prop:2.2}
  In the context of Proposition~\ref{prop:2.1}, the following are
  true.
  \begin{enumerate}[(a)]
  \item If $n \geq 3$ is odd, then $\B^\smallsmile(S/R) = \B(S/R) =
    {_2\B(R)}\cong (\mathbb Z/2)^{(n-1)}$.
  \item If $n \geq 4$ is even, then $\B^\smallsmile(S/R)$ has order
    two and is generated by the Brauer class of the product of symbols
    $(f_1,f_2)_2 \dotsm (f_i,f_{i+1})_2 \dotsm (f_{n-1},f_n)_2$.
  \end{enumerate}
\end{proposition}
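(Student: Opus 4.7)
The plan is to compute $\B^\smallsmile(S/R)$ as a subgroup of ${_2\B(R)} \cong (\mathbb Z/2)^{(n-1)}$ (Theorem~\ref{theorem:1.2}) by choosing an adapted $\mathbb Z/2$-basis, expressing each generator $\beta_i := (f_i,f)_2$ in that basis, and reading off each case from a parity count on $n$.

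To set up the basis, I would use the identity $(f_if_j,f_l)_2 \sim (f_i,f_j)_2$ noted in the text (valid for any triple since $f_1,\dots,f_n$ all vanish at the origin), which rearranges to $(f_i,f_j)_2 + (f_i,f_l)_2 + (f_j,f_l)_2 = 0$. Setting $v_i := (f_i,f_n)_2$ for $i=1,\dots,n-1$ and applying this with $l = n$ yields $(f_i,f_j)_2 = v_i + v_j$ for every pair (with the convention $v_n = 0$). The $n-1$ classes $v_1,\dots,v_{n-1}$ then generate ${_2\B(R)}$, which has order $2^{n-1}$, so they form a basis. Using $(f_i,f_i)_2 = (f_i,-1)_2 = 0$ (since $-1$ is a square in $k$), the generator $\beta_i$ of $\B^\smallsmile(S/R)$ expands as
\[
\beta_i \;=\; \sum_{j \neq i}(v_i + v_j) \;=\; (n-2)\,v_i + \sum_{j=1}^{n-1} v_j \pmod 2\ldotp
\]

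For part (a), $n-2$ is odd, so $\beta_i = v_i + V$ with $V := v_1 + \dots + v_{n-1}$; then $\beta_i + \beta_n = v_i$ for $i = 1,\dots,n-1$ recovers every basis vector, giving $\B^\smallsmile(S/R) = {_2\B(R)}$. For part (b), $n-2$ is even, so every $\beta_i$ equals the single class $V$, which is nonzero in the free module $(\mathbb Z/2)^{(n-1)}$; hence $\B^\smallsmile(S/R) = \langle V \rangle$ has order two. To identify $V$ with the displayed product, I would group the disjoint pairs $(f_{2j-1},f_{2j})_2 = v_{2j-1} + v_{2j}$ for $j = 1,\dots,n/2$; summing these (and using $v_n = 0$) gives exactly $V$.

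The only real obstacle is extracting a convenient basis of ${_2\B(R)}$ from Theorem~\ref{theorem:1.2} together with the triple-line identity; once that is in place, the proposition reduces to a short parity computation over $\mathbb Z/2$.
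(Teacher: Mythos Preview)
Your argument is correct and uses the same key ingredient as the paper, namely the triple identity $(f_i,f_j)_2 + (f_i,f_l)_2 + (f_j,f_l)_2 = 0$ (equivalently $(f_if_j,f_l)_2\sim(f_i,f_j)_2$), together with a parity count on $n$. The organization differs slightly: the paper manipulates symbol expressions directly---for odd $n$ it shows $(f_if_j,f)_2\sim(f_i,f_j)_2$ by reducing the $n-2$ remaining factors, and for even $n$ it groups $f=(f_1f_2)\dotsm(f_{n-1}f_n)$ and collapses each $(f_i,f_{2j-1}f_{2j})_2$ to $(f_{2j-1},f_{2j})_2$---whereas you first pass to the explicit basis $v_i=(f_i,f_n)_2$ of ${_2\B(R)}$ and then read off $\beta_i=(n-2)v_i+V$ by linear algebra over $\mathbb Z/2$. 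Your route is a bit more systematic and makes the parity dichotomy transparent at a glance; the paper's is more hands-on but reaches the same conclusions with the same tools. In part~(a), the equality $\B^\smallsmile(S/R)=\B(S/R)$ then follows (in both arguments) from the chain $\B^\smallsmile(S/R)\subseteq\B(S/R)\subseteq{_2\B(R)}$, the last inclusion because $S/R$ has degree~$2$.
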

\begin{proof}
  The group ${_2\B(R)}$ is generated by the symbols $(f_i,
  f_j)_2$. The subgroup $\B^\smallsmile(S/R)$ is generated by all of
  the symbols $(f_i, f)_2$.  If $n$ is odd,
  \begin{equation}
    \label{eq:107}
    \begin{split}
      (f_1 f_2, f)_2 & \sim
      (f_1f_2, f_3f_4 \dotsm f_n)_2  \\
      &\sim (f_1f_2, f_3)_2 \dotsm
      (f_1f_2,  f_n)_2  \\
      & \sim (f_1, f_2)_2 \dotsm
      (f_1,  f_2)_2  \\
      & \sim (f_1, f_2)_2
    \end{split}
  \end{equation}
  from which it follows that $\B^\smallsmile(S/R)$ contains all of the
  symbols $(f_i, f_j)_2$.  Part (a) follows.  Now assume $n$ is even.
  If $i$ is odd
  \begin{equation}
    \label{eq:108}
    \begin{split}
      (f_i, f)_2 & \sim (f_i,f_1f_2)_2 \dotsm (f_i,f_if_{i+1})_2
      \dotsm
      (f_i,f_{n-1}f_n)_2 \\
      & \sim (f_1,f_2)_2 \dotsm (f_i,f_{i+1})_2 \dotsm (f_{n-1},f_n)_2
    \end{split}
  \end{equation}
  which is Brauer equivalent to
  \begin{equation}
    \label{eq:109}
    \begin{split}
      (f_i, f)_2 & \sim (f_i,f_1f_2)_2 \dotsm (f_i,f_{i-1}f_{i})_2
      \dotsm
      (f_i,f_{n-1}f_n)_2 \\
      & \sim (f_1,f_2)_2 \dotsm (f_{i-1},f_{i})_2 \dotsm
      (f_{n-1},f_n)_2
    \end{split}
  \end{equation}
  if $i$ is even.  This implies that $\B^\smallsmile(S/R)$ is a group
  of order $2$.
\end{proof}

\subsection{Quasi-homogeneous singularities}
\label{sec:2.1}
As shown in Proposition~\ref{prop:2.1}, the polynomial $h(x,y,z) = z^2
- f(x,y)$ is quasi-homogeneous and the ring $T$ is graded.  The
singularity of $X$ is a quasi-homogeneous singularity, where $X =
Z(h)$ is viewed as a hypersurface in $\mathbb A^3$.  For $t \in
k^\ast$, $h(t^2 x, t^2 y, t^n z) = t^{2n} h(x,y,z)$. Therefore, $X$ is
invariant under the $k^\ast$-action $(x,y,z) \mapsto (t^2 x, t^2 y,
t^n z)$ on $\mathbb A^3$.  Using \cite{MR0284435}, we compute the
desingularization of $X$.  There are two cases.

If $n$ is even, the singularity of $X$ can be resolved by one
blowing-up followed by a normalization. The exceptional divisor $E$ is
irreducible, isomorphic to a curve of genus $(n-2)/2$, and $E\cdot E =
-2$.

If $n$ is odd, the desingularization is accomplished by a sequence of
one blowing-up, one normalization, then $n$ more blowings-up.  The
exceptional divisor consists of $n+1$ copies of $\mathbb P^1$, call
them $E_0, E_1, \dots, E_n$. The intersection matrix is defined by
\begin{equation}
  \label{eq:110}
  \begin{split}
    E_0 \cdot E_0 & =
    -(n+1)/2\\
    E_0 \cdot E_1 & = \dotsm = E_0 \cdot E_n =
    1 \\
    E_1 \cdot E_1 & = \dotsm  = E_n \cdot E_n  = -2 \\
    E_i \cdot E_j & = 0 \text{, \quad if $0 < i$, $0 < j$, $i \neq
      j$}\ldotp
  \end{split}
\end{equation}
The determinant of $(E_i\cdot E_j)$ is equal to $2^{n-1}$.  If we
assume $k = \mathbb C$, then using \cite{MR0153682} or
\cite{MR1611536}, we compute $\HH_1(M,\mathbb Z)$, the abelianized
topological fundamental group of the intersection $M$ of $X$ with a
small sphere about the singular point $P$.  It follows that $\pi_1$ is
generated by $\gamma_0, \gamma_1, \dots, \gamma_n$, with relations $e
= \gamma_1 \dotsm \gamma_n \gamma_0^{(n+1)/2}$, $e = \gamma_0
\gamma_1^{-2}$, \dots, $e=\gamma_0 \gamma_n^{-2}$, and $\gamma_0$ is
central.  It follows that $\HH_1(M,\mathbb Z)$ is isomorphic to the
cokernel of the map defined by $(E_i\cdot E_j)$, hence is isomorphic
to $(\mathbb Z/2)^{(n-1)}$.

Notice that for $n=3$, the singularity of $X$ is rational of type
$D_4$ (see \cite[p. 258]{MR0276239} for instance).  In
\cite[(3.5)]{MR632654} it is shown that the singularity of $X =
\Spec{T}$ is rational if and only if $n = 3$.  For the remainder of
this section we distinguish between the odd $n$ and even $n$ cases.

\subsection{The case when $n$ is odd}
\label{sec:2.2}
As above, let $f = f_1f_2f_3\dotsm f_n \in k[x,y]$ where each $f_i$ is
a linear form. For this section, we assume $n$ is an odd integer
greater than or equal to $3$.  Let $R = k[x,y][f^{-1}]$, $S =
R[z]/(z^2 = f)$, and $T = k[x,y,z]/(z^2 = f)$.  An affine change of
coordinates allows us to assume $f_1 =x$ and for all $i > 1$, $f_i =
a_i x - y$.

\begin{proposition}
  \label{prop:2.3}
  In the above notation, the following are true.
  \begin{enumerate}[(a)]
  \item $S$ is a rational surface.
  \item $S$ is factorial, or in other words, $\Pic{S} = 0$.
  \item $S^\ast = k^\ast \times \langle z\rangle \times \langle f_2
    \rangle \times \dotsm \times \langle f_n \rangle$
      \item
    \[
    \HH^i(G,S^\ast) \cong
    \begin{cases}
      R^\ast & \text{if $i = 0$}\\
      \frac{\langle f_2\rangle}{\langle f_2^2\rangle} \times \dotsm
      \times \frac{\langle f_n\rangle}{\langle f_n^2\rangle} \cong
      (\mathbb Z/2)^{(n-1)}
      & \text{if $i = 2, 4, 6, \dots$}\\
      \langle 1\rangle & \text{if $i=1, 3, 5, \dots $}
    \end{cases}
    \]
  \end{enumerate}
\end{proposition}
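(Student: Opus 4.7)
The plan is to parametrize $S$ explicitly as a localization of the affine plane, from which parts (a), (b), and (c) will follow almost immediately, and then to carry out a direct Tate cohomology calculation for (d). Setting $t = y/x$ and $w = z/x^{(n-1)/2}$, which makes sense because $n$ is odd, and using $f_i = (a_i - t)x$ for $i \geq 2$, one obtains $z^2 = x^n\prod_{i\geq 2}(a_i - t)$ and hence $w^2 = x\prod_{i\geq 2}(a_i - t)$. Solving gives $x = w^2/\prod_{i\geq 2}(a_i - t)$, so $L = K(z) = k(t,w)$ and $S$ is rational; moreover the assignment $(t,w)\mapsto(x,y,z)$ defined by $x = w^2/\prod(a_i - t)$, $y = tx$, $z = x^{(n-1)/2}w$ is inverse to $(x,y,z)\mapsto(y/x,\; z/x^{(n-1)/2})$, identifying $S$ with $\Spec k[t,w,w^{-1},(a_2-t)^{-1},\dotsc,(a_n-t)^{-1}]$.

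Part (b) then follows because a localization of the factorial ring $k[t,w]$ is factorial, which forces $\Pic(S) = 0$. For part (c), the localization description gives $S^\ast = k^\ast\times\langle w\rangle\times\langle a_2 - t\rangle\times\dotsm\times\langle a_n - t\rangle$; I would convert to the claimed generators via the identities $a_i - t = f_i(f_2\dotsm f_n)/z^2$ and $w = (f_2\dotsm f_n)^{(n-1)/2}/z^{n-2}$, which come from $f_1 = z^2/(f_2\dotsm f_n)$ and lie in $S^\ast$ precisely because $n$ is odd. Independence of $\{z, f_2,\dotsc,f_n\}$ modulo $k^\ast$ follows by squaring a hypothetical relation $cz^a\prod f_i^{b_i}=1$ and reading off exponents in the free abelian group $R^\ast/k^\ast = \langle f_1,\dotsc,f_n\rangle$ recalled from the introduction to Section~\ref{sec:2}.

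For (d), the new basis makes the $G$-action transparent: $\sigma$ fixes $k^\ast$ and each $f_i$, and $\sigma(z) = -z$. I would then compute Tate cohomology directly. A typical element $u = cz^a\prod_{i\geq 2}f_i^{b_i}$ satisfies $\sigma(u) = c(-1)^a z^a\prod f_i^{b_i}$, so $u\in(S^\ast)^G$ iff $a$ is even, and invoking $z^2 = f_1\dotsm f_n$ one recovers $(S^\ast)^G = R^\ast$. The norm equals $N(u) = c^2(-1)^a f_1^a\prod_{i\geq 2}f_i^{a+2b_i}$; since $k$ is algebraically closed, $k^\ast$ is $2$-divisible, so $N(S^\ast)$ is exactly the subgroup of $R^\ast$ consisting of $k^\ast$-multiples of monomials $f_1^{a_1}\dotsm f_n^{a_n}$ with all $a_i$ of the same parity, and the quotient $R^\ast/N(S^\ast)$ is visibly isomorphic to $(\mathbb Z/2)^{(n-1)}$ with the generators listed in the statement. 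A parallel computation yields $\ker N = (\sigma-1)S^\ast = \{\pm 1\}$, whence $H^1 = 0$, and the remaining degrees follow from the $2$-periodicity of the cohomology of a cyclic group of order $2$. The main obstacle is the change-of-basis step in (c): one must track which exponents of $z$ remain integral, and the oddness of $n$ enters precisely to ensure that the half-integer powers of $z$ that would otherwise appear in the formulas for $w$ and $a_i - t$ do not arise.
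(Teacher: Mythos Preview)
Your proof is correct and follows essentially the same route as the paper: the change of variables $t=y/x$, $w=z/x^{(n-1)/2}$ is exactly the composite $\beta\alpha$ the paper writes down (with $v$ in place of your $t$), leading to the same localized polynomial ring and hence the same proofs of (a)--(c). The only difference is that for (d) the paper simply invokes \cite[Theorem~10.35]{R:IHA} on the decomposition $S^\ast = k^\ast\times\langle z\rangle\times\langle f_2\rangle\times\dotsm\times\langle f_n\rangle$, whereas you carry out the Tate cohomology computation by hand; your explicit norm calculation is correct and makes the appearance of the ``same parity'' subgroup of $R^\ast$ transparent, but it is not a different method, just an unpacking of what the cited reference does.
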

\begin{proof}
  Consider the two $k$-algebra isomorphisms
  \begin{equation}
    \label{eq:111}
    \begin{CD}
      S = \frac{k[x,y,z][z^{-1}]}{z^2 = f(x,y)} @>{\alpha}>>
      \frac{k[x,v,w][w^{-1}]}{w^2 = x f(1,v)} @>{\beta}>>
      k[v,w][w^{-1}, f(1,v)^{-1}]
    \end{CD}
  \end{equation}
  where $\alpha$ is defined by $x \mapsto x$, $y \mapsto xv$, $z
  \mapsto x^{(n-1)/2} w$, and $\beta$ is defined by $x \mapsto w^{2}
  f(1,v)^{-1} $, $v \mapsto v$, $w \mapsto w$.  Let $U$ denote the
  ring on the right hand side of \eqref{eq:111}. Since $U$ is rational
  and factorial, this proves (a) and (b).  The group of units in $U$ is equal
  to $k^\ast \times \langle w\rangle \times \langle f_2(1,v) \rangle
  \times \dotsm \times \langle f_n(1,v) \rangle$.  Using this, one
  checks that the group of units in $S$ is generated by $k^\ast$ and
  the elements $z, f_2, \dots, f_n$.  Apply
  \cite[Theorem~10.35]{R:IHA} to get part (d).
\end{proof}

\begin{proposition}
  \label{prop:2.4}
  In the context of Section~\ref{sec:2.2}, the following are
  true.
  \begin{enumerate}[(a)]
  \item $\Cl(T) \cong (\mathbb Z/2)^{(n-1)}$ is generated by the prime
    divisors $I_i = (z, f_i)$, where $i = 2, \dots, n$.
  \item
    \[
    \HH^i(G, \Cl(T)) \cong
    \begin{cases}
      \Cl(T) \otimes \mathbb Z/2 \cong (\mathbb Z/2)^{(n-1)}
      & \text{if $i$ is odd}\\
      \Cl(T)^G = {\Cl(T)} \cong (\mathbb Z/2)^{(n-1)} & \text{if $i$ is
        even}
    \end{cases}
    \]
  \end{enumerate}
\end{proposition}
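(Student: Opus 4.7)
My plan is to deduce part (a) from the standard localization sequence for class groups combined with Proposition~\ref{prop:2.3}, and then to observe that the Galois action on $\Cl(T)$ is trivial, which reduces part (b) to a formal computation.

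For (a), I would begin with the excision sequence
\[
\bigoplus_{P} \mathbb{Z}\cdot P \to \Cl(T) \to \Cl(S) \to 0,
\]
where the sum ranges over height-one primes $P \subset T$ containing $f$. Since $S$ is regular with $\Pic(S)=0$ by Proposition~\ref{prop:2.3}(b), we have $\Cl(S) = 0$, so $\Cl(T)$ is generated by these primes. Because $z^2 = f_1 \cdots f_n$, any prime containing some $f_i$ must also contain $z$, and $I_i = (z, f_i)$ is itself prime of height one since $T/I_i \cong k[x,y]/(f_i)$ is a polynomial ring in one variable; write $D_i$ for the corresponding prime divisor. The crucial local computation takes place in the DVR $T_{I_i}$: every $f_j$ with $j \neq i$ is a unit there, so $z^2$ and $f_i$ generate the same ideal, which forces $z$ to be a uniformizer and gives $\nu_{D_i}(z) = 1$ and $\nu_{D_i}(f_i) = 2$.

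I would then combine this with Proposition~\ref{prop:2.3}(c), which exhibits $S^\ast/k^\ast$ as a free abelian group with basis $\{z, f_2, \dots, f_n\}$. The kernel of the surjection $\bigoplus_{i=1}^n \mathbb{Z}\cdot D_i \twoheadrightarrow \Cl(T)$ is the image of the divisor map from $S^\ast/k^\ast$, which is spanned by $\Div(z) = D_1 + \cdots + D_n$ and $\Div(f_i) = 2 D_i$ for $i = 2, \dots, n$. The Smith normal form of the resulting $n \times n$ integer matrix is $\mathrm{diag}(1, 2, 2, \dots, 2)$, so $\Cl(T) \cong (\mathbb{Z}/2)^{(n-1)}$ with generators $D_2, \dots, D_n$, proving (a). For part (b), the identity $\sigma(I_i) = (-z, f_i) = I_i$ shows that $G$ acts trivially on $\Cl(T)$; since $\Cl(T)$ is already $2$-torsion, the periodic resolution for a cyclic group of order two gives $\HH^i(G, \Cl(T)) \cong \Cl(T)$ for every $i \geq 0$, realized as $\Cl(T)^G$ for even $i$ and $\Cl(T)\otimes\mathbb{Z}/2$ for odd $i$, matching the stated formula.

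The principal obstacle is the valuation computation pinning down $z$ as a uniformizer at each $I_i$, since this is what produces the $2$'s in the presentation matrix and hence the $2$-torsion structure of $\Cl(T)$. Once this is in place, everything else is either a direct appeal to Proposition~\ref{prop:2.3} or a standard formal manipulation.
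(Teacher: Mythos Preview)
Your argument is correct and follows essentially the same route as the paper: both invoke Nagata's localization sequence for $S=T[z^{-1}]$ together with $\Cl(S)=0$, identify the height-one primes over $z$ as the $I_i$, compute $\Div(z)=\sum_i I_i$ and $\Div(f_i)=2I_i$ via the uniformizer $z$ at $I_i$, and then read off $\Cl(T)\cong(\mathbb Z/2)^{(n-1)}$; for (b) both simply observe that $\sigma$ fixes each $I_i$, so $G$ acts trivially and the cohomology of a trivial $2$-torsion module is immediate. Your write-up is somewhat more explicit (verifying primality of $I_i$, invoking Proposition~\ref{prop:2.3}(c) for the unit group, and phrasing the presentation via Smith normal form), but there is no substantive difference in method.
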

\begin{proof}
  In $T$, the minimal primes of $z$ are $I_1 = (f_1,z)$, \dots, $I_n =
  (f_n,z)$.  A local parameter for $T_{I_i}$ is $z$. The divisor of
  $z$ on $\Spec{T}$ is $\Div(z) = I_1+\dots+I_n$. The only minimal
  prime of $f_i$ is $I_i$. The divisor of $f_i$ is $2 I_i$.  Since $S
  = T[z^{-1}]$, by Nagata's Theorem \cite[Theorem~7.1]{F:DCG},
  $\Cl(T)$ is generated by $I_1, \dots, I_n$, subject to the relations
  $I_1 + \dots + I_n \sim 0$, $2I_1 \sim 0$, \dots, $2I_n \sim 0$.
  Therefore, $\Cl(T) \cong (\mathbb Z/2)^{(n-1)}$ is generated by any
  $n-1$ of $I_1, \dots, I_n$ and this proves (a).  We remark that (a)
  also follows from \cite{MR2389245} or \cite{MR751470}.  Since the
  group $G = \langle \sigma \rangle$ acts trivially on $\Cl(T)$, we
  get (b).
\end{proof}
\begin{proposition}
  \label{prop:2.5}
  In the context of Proposition~\ref{prop:2.3}, the sequence
  \[
  0 \rightarrow \B(S/R) \rightarrow \B(R) \rightarrow \B(S)
  \rightarrow 0
  \]
  is exact.
\end{proposition}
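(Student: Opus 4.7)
My plan is as follows. Exactness at $\B(S/R)$ and at $\B(R)$ is tautological, since $\B(S/R)$ is by definition the kernel of the restriction map $\B(R) \to \B(S)$; only surjectivity of $\B(R) \to \B(S)$ requires proof. I intend to obtain this from the Hochschild--Serre spectral sequence for \'etale $\mathbb{G}_m$-cohomology of the \'etale Galois cover $\pi : \Spec S \to \Spec R$,
\[
E_2^{p,q} = \HH^p(G, \HH^q(\Spec S, \mathbb{G}_m)) \Longrightarrow \HH^{p+q}(\Spec R, \mathbb{G}_m).
\]
The map $\pi$ is \'etale because $f \in R^\ast$ and the characteristic of $k$ is not $2$.

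All needed cohomological inputs come from Proposition~\ref{prop:2.3}. Part~(b) gives $\HH^1(\Spec S, \mathbb{G}_m) = \Pic S = 0$, so the $q = 1$ row of $E_2$ vanishes identically; in particular the 5-term exact sequence of the spectral sequence extends to a 7-term exact sequence. Part~(d) gives $\HH^3(G, S^\ast) = 0$, which forces the differential $d_3 : E_3^{0,2} \to E_3^{3,0}$ to be zero. The extended exact sequence therefore takes the form
\[
0 \to \HH^2(G, S^\ast) \to \B(R) \to \B(S)^G \to \HH^3(G, S^\ast) = 0,
\]
whose leftmost term, as the kernel of the restriction map, coincides with $\B(S/R)$.

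The substantive remaining step, which I expect to be the main obstacle, is to verify that $\sigma$ acts trivially on $\B(S)$, so that $\B(S)^G = \B(S)$. For this I will use the isomorphism \eqref{eq:111} from the proof of Proposition~\ref{prop:2.3}, under which $S \cong U = k[v,w][w^{-1}, (a_2 - v)^{-1}, \dots, (a_n - v)^{-1}]$; chasing the isomorphism shows that $\sigma$ corresponds to $v \mapsto v$, $w \mapsto -w$. Applying Theorem~\ref{theorem:1.2} to $U$, viewed as the complement in $\mathbb{A}^2$ of the $n$ lines $\{w = 0\}$ and $\{v = a_i\}$, exhibits ${_d\B(S)}$ as generated by symbol algebras $(w, a_i - v)_d$ and $(a_i - v, a_j - v)_d$. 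The second family of symbols is pointwise fixed by $\sigma$, while
\[
\sigma\bigl((w, a_i - v)_d\bigr) = (-w, a_i - v)_d \sim (-1, a_i - v)_d (w, a_i - v)_d \sim (w, a_i - v)_d,
\]
since $-1$ is a $d$-th power in the algebraically closed field $k$. Thus $\sigma$ acts as the identity on $\B(S)$, and the surjection $\B(R) \twoheadrightarrow \B(S)^G = \B(S)$ yields the claimed exactness.
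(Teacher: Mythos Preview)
Your argument is correct and takes a genuinely different route from the paper's proof.

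The paper proceeds by direct computation: via the isomorphism \eqref{eq:111} it identifies both ${_m\B(R)}$ and ${_m\B(S)}$ with $(\mathbb Z/m)^{(n-1)}$, generated respectively by the symbols $(x,f_i)_m$ and $(w,f_i(1,v))_m$. Chasing the isomorphism $\beta\alpha$ shows that $(x,f_i)_m$ is carried to $(w,f_i(1,v))_m^2$, so on the full Brauer groups (each isomorphic to $(\mathbb Q/\mathbb Z)^{(n-1)}$) the restriction map is abstractly ``multiplication by $2$'', hence surjective with kernel the $2$-torsion.

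Your Hochschild--Serre argument replaces this explicit calculation of the map by a structural one: with $\Pic S=0$ the $q=1$ row of $E_2$ vanishes, so the filtration on $\B(R)$ yields $0 \to \HH^2(G,S^\ast) \to \B(R) \to E_\infty^{0,2} \to 0$, and the vanishing of $\HH^3(G,S^\ast)$ kills the only relevant differential $d_3$, giving $E_\infty^{0,2}=\B(S)^G$. Your verification that $\sigma$ acts trivially on $\B(S)$ is fine; note that the paper's own proof already records that ${_m\B(U)}$ is generated by the symbols $(w,f_i(1,v))_m$ alone, so the second family $(a_i-v,a_j-v)_d$ is redundant (though harmless). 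The trade-off is that the paper's computation tells you \emph{what} the map is (multiplication by $2$), recovering the identification of $\B(S/R)$ with ${_2\B(R)}$ from Proposition~\ref{prop:2.2}(a) for free, whereas your approach gives surjectivity more cheaply but leaves the shape of the map implicit.
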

\begin{proof}
  Let $U$ be as in the proof of Proposition~\ref{prop:2.3}.  By
  \cite[Theorem~4]{F:Bgk}, the Brauer group ${_m\B(U)}$ is isomorphic
  to $(\mathbb Z/m)^{(n-1)}$. It is generated by the symbol algebras
  $(w,f_i(1,v))_m$.
  The Brauer group ${_m\B(R)}$ is isomorphic to
  $(\mathbb Z/m)^{(n-1)}$ and is generated by the symbol algebras
  $(x,f_i(x,y))_m$.  Under the isomorphism $\beta\alpha$ in
  \eqref{eq:111}, the algebra $(x,f_i(x,y))_m$ maps to
  \[
  (w^2 f_i(1,v)^{-1}, f_i(1,v))_m \sim (w^2, f_i(1,v))_m \sim (w,
  f_i(1,v))^2_m\ldotp
  \]
  As a homomorphism of abstract groups, $\B(R) \rightarrow \B(S)$ can
  be viewed as ``multiplication by $2$''.
\end{proof}
\begin{remark}
  \label{remark:2.6}
  It follows from Proposition~\ref{prop:2.3}(a) that in \eqref{eq:104},
  the sequence of Chase, Harrison, Rosenberg, the groups involving
  $\Pic{S}$ are trivial. The map $\alpha_4$ is an isomorphism.
\end{remark}
\begin{remark}
\label{remark:2.7}
  Propositions~\ref{prop:2.2} and \ref{prop:2.4} imply that $\B(S/R)
  \cong \HH^1(G,\Cl(T))$.
\end{remark}

In Proposition~\ref{prop:2.8}, let $X = \Spec{T}$ and $P$ the
singular point of $X$. By a desingularization of $P$ we mean a proper,
surjective, birational morphism $\pi: \tilde X \rightarrow X$ such
that $\tilde X$ is nonsingular and $\pi$ is an isomorphism on $\tilde
X -E$, where $E = \pi^{-1}(P)$.  Let $T^h = \mathcal O^h_P$ be the
henselization of $\mathcal O_{P}$ and $\hat{T}=\hat{\mathcal O}_P$ the
completion.

\begin{proposition}
  \label{prop:2.8}
   The following are true for any desingularization $\pi : \tilde X
   \rightarrow X$.
   \begin{enumerate}[(a)]
   \item $\Cl(X) = \Cl(\mathcal O_{P})$
   \item $\Cl(\mathcal O^h_P) = \Cl(\hat{\mathcal O}_P)$ is isomorphic
     to $\Cl(X) \oplus V$, where $V$ is a finite dimensional
     $k$-vector space.
   \item If $n=3$, the singularity $P$ is rational and $0=\B(X) =
     \HH^2(X,\mathbb G_m)$.
   \item If $n>3$, the singularity $P$ is irrational and
     $\HH^2(X,\mathbb G_m)$ is isomorphic to the $k$-vector space $V$ of
     Part~(b).
   \item $0= \B(\tilde X) = \HH^2(\tilde X,\mathbb G_m)$.
   \item $0 = \B(X-P)$.
   \end{enumerate}
 \end{proposition}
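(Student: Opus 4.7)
The plan is to handle the six assertions in the order (a), (b), (f), (e), (c), (d), moving from divisor classes to the Brauer group via the Leray spectral sequence for the resolution.

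For (a), I would combine $\Pic(X)=0$ from Proposition~\ref{prop:2.1}(b) with the natural restriction $\Cl(X)\to\Cl(\mathcal O_P)$. Every height-one prime of $\mathcal O_P$ pulls back from $T$, so the map is surjective; its kernel is generated by divisor classes supported away from $P$, which lie inside $\Pic(X)=0$. Proposition~\ref{prop:2.4}(a) already exhibits $\Cl(X)$ as generated by $I_i=(z,f_i)$, each contained in $\mathfrak m_P=(x,y,z)$, so each generator survives the restriction, confirming the bijection. For (b), the equality $\Cl(\mathcal O_P^h)=\Cl(\hat{\mathcal O}_P)$ is a standard fact for two-dimensional normal Henselian excellent local rings. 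The decomposition $\Cl(\hat{\mathcal O}_P)\cong\Cl(X)\oplus V$ then follows from the general result on quasi-homogeneous normal surface singularities (Flenner/Brieskorn), where $V$ is the finite-dimensional $k$-vector space of divisor classes that appear analytically but not algebraically; the splitting is induced by the $k^\ast$-action described in Section~\ref{sec:2.1}.

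For (f), I would apply the ramification sequence \eqref{eq:105} on the smooth surface $X-P$. Because $P$ has codimension two in $X$, the prime divisors of $X-P$ and of $X$ coincide, so the condition of being unramified at every curve is identical for the two surfaces. Combining this with the explicit computation of $\B(S)=\B(T[z^{-1}])$ implicit in the proof of Proposition~\ref{prop:2.5} and with the vanishing $\B(T)=0$ from Proposition~\ref{prop:2.1}(c), one checks directly that every class in $\B(K(X))$ that is unramified on $X-P$ is trivial. Part (e) then follows by purity: for the smooth integral surface $\tilde X$, cohomological Brauer classes inject into those of any open subset, so $\B(\tilde X)\hookrightarrow\B(\tilde X-E)=\B(X-P)=0$, and the equality $\B(\tilde X)=\HH^2(\tilde X,\mathbb G_m)$ is the Gabber-Hoobler theorem cited as \cite{H:WBr}.

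For (c) and (d), I would exploit the Leray spectral sequence for $\pi\colon\tilde X\to X$ with coefficients in $\mathbb G_m$. Using $\Pic(X)=0$, the equality $\pi_\ast\mathbb G_m=\mathbb G_m$ (which follows from $\pi_\ast\mathcal O_{\tilde X}=\mathcal O_X$ by normality of $X$), and $\HH^2(\tilde X,\mathbb G_m)=0$ from part~(e), the five-term sequence collapses to
\[
0\to\Pic(\tilde X)\to \HH^0(X,R^1\pi_\ast\mathbb G_m)\to \HH^2(X,\mathbb G_m)\to 0.
\]
The middle term equals $\bigoplus_i\mathbb Z\cdot E_i$, because $R^1\pi_\ast\mathbb G_m$ is concentrated at $P$ with stalk the Picard group of the formal neighborhood, freely generated by the exceptional components $E_i$ of Section~\ref{sec:2.1}. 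One then identifies the cokernel of $\Pic(\tilde X)\to\bigoplus_i\mathbb Z\cdot E_i$ with the vector space $V$ of part~(b): both measure the discrepancy between the algebraic and the formal/analytic class group at $P$. For $n=3$ the singularity is the rational double point $D_4$ (as noted after \eqref{eq:110}), so $V=0$, giving $\HH^2(X,\mathbb G_m)=0$; combined with $\B(X)=0$ from Proposition~\ref{prop:2.1}(c) this yields (c). For $n>3$ the singularity is irrational, $V\neq 0$, and $\HH^2(X,\mathbb G_m)\cong V$, giving (d). The main obstacle I expect is the explicit identification of the Leray cokernel with the vector space $V$ of (b); this requires a diagram chase relating $\Pic(\tilde X)$, $\Pic(\tilde X\times_X\Spec\hat{\mathcal O}_P)$, and $\Cl(\hat{\mathcal O}_P)$, together with the Flenner-type description of $V$.
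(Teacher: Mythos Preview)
Your overall strategy---the Leray spectral sequence for $\pi$ to reach (c) and (d), purity for (e)---parallels what the paper defers to \cite{F:Bgd}, so the outline is reasonable.  There are, however, two genuine gaps.

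The more serious one is your identification of the stalk of $R^1\pi_\ast\mathbb G_m$ at $P$ with $\bigoplus_i\mathbb Z E_i$.  That stalk is $\Pic(\tilde X\times_X\Spec\mathcal O_P^h)$, and for an \emph{irrational} singularity it is \emph{not} free on the $E_i$: it sits in an exact sequence
\[
0\longrightarrow\Pic^0(\hat T)\longrightarrow\Pic(\tilde X^h)\xrightarrow{\ \theta\ }\bigoplus_i\mathbb Z E_i\longrightarrow 0,
\]
where $\theta(D)=\sum(D\cdot E_i)E_i$ and $\Pic^0(\hat T)$ is precisely the vector space $V$.  Worse, if the stalk really were $\bigoplus_i\mathbb Z E_i$ your five-term sequence would make $\HH^2(X,\mathbb G_m)$ the cokernel of the global $\theta$, and the paper's calculation $\tilde F_i\cdot E_j=\delta_{ij}$ (equivalently $\GG(T)=0$) shows that map is surjective---so you would obtain $\HH^2(X,\mathbb G_m)=0$ for \emph{all} $n$, contradicting (d).  The correct diagram chase compares $0\to\bigoplus\mathbb Z E_i\to\Pic(\tilde X)\to\Cl(T)\to 0$ with $0\to\bigoplus\mathbb Z E_i\to\Pic(\tilde X^h)\to\Cl(\hat T)\to 0$; the snake lemma then gives $\CoKernel\bigl(\Pic(\tilde X)\to\Pic(\tilde X^h)\bigr)\cong\Cl(\hat T)/\Cl(T)=V$.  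The paper establishes (b) through exactly this Lipman machinery (the groups $\Pic^0$, $\HH(T)$, $\GG(T)$ and diagram \eqref{eq:112}) together with the explicit intersection numbers on the resolution described in \eqref{eq:110}; your appeal to ``Flenner/Brieskorn'' and the $k^\ast$-action skips the step $\GG(T)=0$ that makes the splitting work.

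The second gap is in (f).  Knowing $\B(T)=0$ from Proposition~\ref{prop:2.1}(c) does not show that every class in $\B(L)$ unramified along all prime divisors of $X$ is trivial: because $T$ is singular at $P$, purity fails and the inclusion $\B(T)\hookrightarrow\B(X-P)$ can be strict.  Your phrase ``one checks directly'' hides a nontrivial ramification computation along the divisors $I_j$ (the complement of $\Spec S$ in $X-P$).  As written, your argument for (e) rests on (f), so the incompleteness propagates; a cleaner route is to prove (e) first and then deduce (f) from the localization sequence for $\tilde X-E\subset\tilde X$, using that for odd $n$ the exceptional divisor $E$ is a tree of rational curves, whence $\HH^1(E,\mathbb Q/\mathbb Z)=0$.
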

 \begin{proof}
   By \cite[Corollary~10.3]{F:DCG}, $\Cl(T) = \Cl(\mathcal O_P)$.  By
   Mori's Theorem \cite[Corollary~6.12]{F:DCG}, the natural maps
   $\Cl(\mathcal O_{P}) \rightarrow \Cl(\mathcal O^h_P) \rightarrow
   \Cl(\hat{\mathcal O}_P)$ are one-to-one.  By Artin Approximation
   \cite{MR0268188}, $\Cl(\mathcal O^h_P) = \Cl(\hat{\mathcal O}_P)$.
   Assume for now that $\pi: \tilde X \rightarrow X$ is a minimal
   resolution, so the exceptional divisor is as described in
   \eqref{eq:110}.  The Neron-Severi group $\mathbf E =
   \bigoplus_{i=0}^n \mathbb Z E_i$ is the free abelian group on the
   irreducible components of the exceptional fiber.  The cokernel of
   the intersection matrix \eqref{eq:110} applied to $\mathbf E$ is
   called $\HH(T)$.  Then $\HH(T)$ is isomorphic to $(\mathbb
   Z/2)^{(n-1)}$.  Given a divisor $D$ on $\tilde X$, sending the
   class of $D$ in $\Pic(\tilde X)$ to $\sum (D\cdot E_i) E_i$ defines
   a homomorphism $\theta : \Pic(\tilde X) \rightarrow \mathbf E$. The
   kernel of $\theta$ is denoted $\Pic^0(T)$.  The cokernel of
   $\theta$ is denoted $\GG(T)$.  By
   \cite[Proposition~15.3]{MR0276239}, the groups $\HH(T)$,
   $\Pic^0(T)$, $\GG(T)$ depend only on $T$ and not $\tilde X$.  By
   \cite[Proposition~16.3]{MR0276239}, there is a commutative diagram
   \begin{equation}
     \label{eq:112}
     \begin{CD}
       0 @>>> \Pic^0(T) @>>> \Cl(T) @>>> \HH(T) @>>> \GG(T) @>>> 0 \\
       @. @VV{\alpha}V @VV{\beta}V @VV{=}V @VVV \\
       0 @>>> \Pic^0(\hat T) @>>> \Cl(\hat T) @>>> \HH(\hat T) @>>> 0
     \end{CD}
   \end{equation}
   with exact rows.  Let $F_i$ denote the divisor on $X$ corresponding
   to the prime ideal $I_i = (z,f_i)$.  If $\tilde F_i$ is the strict
   transform of $F_i$ on $\tilde X$, then $\tilde F_i \cdot E_j =
   \delta_{ij}$, the Kronecker delta function.  This shows that
   $\GG(T) = 0$. By Proposition~\ref{prop:2.4}(a), $\Cl(T) \cong
   \HH(T)$ and $\Pic^0(T)=0$. From \eqref{eq:112}, $\Pic^0(\hat T)
   \cong \Cl(\hat T)/\Cl(T)$.  It follows from Artin's construction
   in \cite[pp. 486--488]{MR0146182} that the group $\Pic^0(\hat T)$
   is a finite dimensional $k$-vector space.  For an exposition, see
   \cite[pp. 423--426]{MR0379489}, especially the proof of
   Corollary~4.5 and the Remark which follows.  This proves (b).
   The rest follows from \cite{F:Bgd}.
 \end{proof}
\begin{question}
  \label{question:2.9}
  As in Proposition~\ref{prop:2.3}, let $f = f_1f_2f_3\dotsm f_n \in
  k[x,y]$ where $f_1, f_2, f_3, \dots, f_n$ are linear forms, and $n$
  is an odd integer greater than $3$.  Assume $f_i \neq x$ and $f_i
  \neq y$ for all $i$.  Let $K = k(x,y)$, $L = K(\sqrt{f})$. Consider
  $L_1 = L(\sqrt{x})$, $L_2 = L(\sqrt{y})$, $L_3 = L(\sqrt{x},
  \sqrt{y})$. Then $L_3/L$ is an abelian Galois extension of degree
  $2^2$. We also view $L_3$ as the function field of the surface $S_3$
  which is defined by $z^2 = f(x^2, y^2)$. Can we use the extension
  $S_3/S$ to reduce to the case where $n$ is even?
\end{question}

\subsection{The case when $n$ is even}
\label{sec:2.3}
As above, let $T = k[x,y,z]/(z^2 - f)$, where $f_1, f_2, f_3, \dots,
f_n$ are linear forms in $k[x,y]$, $f = f_1f_2f_3\dotsm f_n$ is
square-free, and $n \geq 4$ is even.  Without loss of generality
assume $f_1 = x$ and $f_2 = y$.  Write $\kappa$ for $n / 2$.  As
mentioned above, the singularity of $T$ is resolved by blowing up the
maximal ideal $(x,y,z)$ and then taking the integral closure.  The
homomorphism of $k$-algebras
\begin{equation}
  \label{eq:113}
  T =
  \frac{k[x,y,z]}{(z^2 - x y  f_3f_4\dotsm f_n)}
  \rightarrow
  \frac{k[x,v,w]}{(w^2 - v(a_3-b_3v) (a_4-b_4v)
    \dotsm  (a_n-b_nv) )} = \tilde T
\end{equation}
defined by $x\mapsto x$, $y\mapsto xv$, $z\mapsto x^\kappa w$ is an
affine piece of the resolution of $T$.  Let $C$ be the affine
hyperelliptic curve defined by $w^2 - f(1,v)$.  It is clear that
$\tilde T = k[x] \otimes_k \mathcal O(C)$, hence is regular.  Both $T$
and $\tilde T$ are integral domains.  The map \eqref{eq:113} is
birational, because we identify $v = y x^{-1}$ and $w = z x^{-\kappa}$
with elements of the quotient field of $T$.  So \eqref{eq:113} is
one-to-one.
\begin{proposition}
  \label{prop:2.10}
  If $n$ is even, and $S$ is as in the top of
  Section~\ref{sec:2}, then $S$ is birational to a ruled surface
  $\mathbb A^1 \times C$, where $C$ is an affine hyperelliptic curve
  defined by an equation of the form $y^2 = (x-\lambda_2)
  (x-\lambda_3) \dotsm (x-\lambda_n)$.  The genus of the nonsingular
  completion of $C$ is $(n-2)/2$.
\end{proposition}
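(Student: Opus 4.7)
The plan is to exploit the ring homomorphism $T \to \tilde T$ displayed in equation~\eqref{eq:113}, which the preceding paragraph has already shown to be injective and birational. Since $S = T[f^{-1}]$ is a localization of $T$, the rings $T$, $S$, and $\tilde T$ share a common fraction field. The target $\tilde T = k[x] \otimes_k \mathcal{O}(C)$ is, by construction, the coordinate ring of $\mathbb{A}^1 \times C$, where $C$ is the affine curve cut out by $w^2 - v(a_3 - b_3 v)\dotsm(a_n - b_n v)$. From this the first assertion follows immediately: $S$ is birational to $\mathbb{A}^1 \times C$.

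I would then put the equation of $C$ into the stated normal form. The polynomial $v(a_3 - b_3 v)\dotsm(a_n - b_n v)$ has degree $n - 1$ in $v$. Since $f$ is square-free and $f_1 = x$, $f_2 = y$, each remaining $f_i = a_i x + b_i y$ with $i \geq 3$ is linearly independent from both $x$ and $y$: were $b_i = 0$ or $a_i = 0$, some $f_i$ would be a scalar multiple of $f_1$ or $f_2$, contradicting square-freeness of $f$, and similarly $a_i/b_i = a_j/b_j$ for distinct $i,j \geq 3$ would force $f_i$ and $f_j$ proportional. Hence the scalars $0, a_3/b_3, \dots, a_n/b_n$ are $n - 1$ distinct values. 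After the substitution $x = v$ and a rescaling $w = \gamma y$ that absorbs the product of leading coefficients into the square (possible because $k$ is algebraically closed), the equation becomes $y^2 = (x - \lambda_2)\dotsm(x - \lambda_n)$ with $\lambda_2 = 0$ and $\lambda_i = a_i/b_i$ for $i \geq 3$, all distinct as required.

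For the genus, I would invoke the standard formula: the nonsingular projective model of $y^2 = P(x)$ with $P$ square-free of odd degree $d$ has genus $(d-1)/2$. Here $d = n - 1$ is odd because $n$ is even, giving genus $(n - 2)/2$. The real content has already been placed into the exposition preceding the proposition, in the identification $\tilde T \cong k[x]\otimes_k\mathcal{O}(C)$ of \eqref{eq:113}; what remains is a coordinate change and an application of the genus formula, and there is no significant obstacle beyond bookkeeping.
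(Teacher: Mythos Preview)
Your proof is correct and follows essentially the same route as the paper: both rely on the birational map \eqref{eq:113} already established in the paragraph preceding the proposition, observe that $S$ is a localization of $T$ and hence shares its fraction field, and then read off the genus from the degree of the defining polynomial of $C$. Your argument is slightly more explicit in verifying that the $n-1$ roots $0, a_3/b_3, \dots, a_n/b_n$ are distinct (which the paper leaves implicit) and in invoking the genus formula directly, whereas the paper defers to Section~\ref{sec:3.2}; but these are differences in level of detail, not in strategy.
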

\begin{proof}
  The previous paragraph proves that $T$ is birational to a ruled
  surface as claimed. But $S = T[z^{-1}]$ is birational to $T$.  By
Section~\ref{sec:3.2}, if $\bar C$ is the complete nonsingular
  model for $C$, then the genus of $\bar C$ is $\kappa -1 = (n-2)/2$.
\end{proof}

The ideal of $\tilde T$ lying over $(x,y,z)$ is the principal ideal
$(x)$.  Upon tensoring with $T[x^{-1}]$, \eqref{eq:113} becomes an
isomorphism.  The only minimal prime of $x$ in $T$ is $I = (x,z)$.
Upon localizing at $I$, $z$ is a local parameter for $T_I$.  The
divisor of $x$ on $\Spec{T}$ is $2I$.  By \cite[Theorem~8.1]{F:DCG},
$\Cl(\tilde T) = \Cl(C)$.  Because $x$ is irreducible in $\tilde T$,
it follows that $\Cl(T[x^{-1}]) = \Cl(\tilde T[x^{-1}]) = \Cl(\tilde
T) = \Cl(C)$.  The group $\mathcal O^\ast(C)$ of units on $C$ is equal
to $k^\ast$, because for a nonsingular completion $\bar C$ of $C$
there is only one point at infinity $\bar C - C$.  It follows that the
group of units of $T[x^{-1}]$ is $k^\ast \times \langle x \rangle$.
Nagata's Theorem \cite[Theorem~7.1]{F:DCG} becomes
\begin{equation}
  \label{eq:114}
  \begin{CD}
    0 @>>> \mathbb ZI/ 2\mathbb Z I @>>> \Cl(T) @>{\sigma}>>
    \Cl(T[x^{-1}]) @>>> 0\ldotp
  \end{CD}
\end{equation}

Let $T^h$ denote the henselization of $T$ at the maximal ideal
$(x,y,z)$.  Let $X = \Spec{T}$ and $P$ the singular point of $X$.  Let
$T^h = \mathcal O^h_P$ be the henselization of $\mathcal O_{P}$ and
$\hat{T}=\hat{\mathcal O}_P$ the completion.
 \begin{proposition}
   \label{prop:2.11}
   If $\pi: \tilde X \rightarrow X$ is any desingularization of $P$ and
 $E = \pi^{-1}(P)$,
   then the following are true.
   \begin{enumerate}[(a)]
   \item $\Cl(X) = \Cl(\mathcal O_{P})$.
   \item $\Cl(T^h) = \Cl(\hat{T})$ is isomorphic
     to $\Cl(X) \oplus V$, where $V$ is a finite dimensional
     $k$-vector space.
   \item $\HH^2(X,\mathbb G_m)$  is isomorphic to the $k$-vector space  $V$ of
     Part~(b).
   \item $0= \B(\tilde T) = \HH^2(\tilde T,\mathbb G_m)$.
   \item $0= \B(\tilde X) = \HH^2(\tilde X,\mathbb G_m)$.
   \item $\B(X-P)$ is isomorphic to $\HH^1(E,\mathbb Q/\mathbb Z)$,
     which is isomorphic to $(\mathbb Q/\mathbb Z)^{(n-2)}$.
   \end{enumerate}
 \end{proposition}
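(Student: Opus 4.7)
The plan is to follow very closely the template already worked out for the odd case in Proposition~\ref{prop:2.8}, adapting the intersection-theoretic input to the fact that in the even case the exceptional fiber $E$ is a single smooth curve of genus $\kappa-1$ with $E\cdot E = -2$. Parts (a) and (b) require no new ideas: $\Cl(X)=\Cl(\mathcal O_P)$ is an immediate application of \cite[Corollary~10.3]{F:DCG} because $P$ is the only singular point of $X$, and for (b) one chains Mori's theorem \cite[Corollary~6.12]{F:DCG} with Artin Approximation \cite{MR0268188} to get $\Cl(\mathcal O_P)\hookrightarrow \Cl(T^h)=\Cl(\hat T)$, then cites Artin's construction from \cite[pp.~486--488]{MR0146182} (as expounded in \cite[pp.~423--426]{MR0379489}) to identify the cokernel $V=\Pic^0(\hat T)$ as a finite-dimensional $k$-vector space.

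For (c), I would set up the commutative diagram \eqref{eq:112} from \cite[Proposition~16.3]{MR0276239}. The intersection lattice is now the $1\times 1$ matrix $(-2)$, so $\mathbf E=\mathbb Z E$ and $\HH(T)\cong \mathbb Z/2$. To see that $\GG(T)=0$ and hence that $\Cl(T)\to\HH(T)$ is surjective, I would exhibit a divisor on $\tilde X$ whose intersection number with $E$ equals $1$; the strict transform of the prime divisor corresponding to $I=(x,z)$ serves this purpose, since on the blowup-and-normalization patch \eqref{eq:113} the ideal of $\tilde T$ over $I$ is principal. Combining this with the Nagata sequence \eqref{eq:114} and the identification $\Cl(T[x^{-1}])=\Cl(C)$ already established, one reads off $\Pic^0(T)$ from the top row of \eqref{eq:112} and then obtains $\Pic^0(\hat T)\cong \Cl(\hat T)/\Cl(T)\cong V$. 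Finally, the identification $\HH^2(X,\mathbb G_m)\cong \Pic^0(\hat T)$ comes from the main theorem of \cite{F:Bgd}, exactly as invoked at the end of the proof of Proposition~\ref{prop:2.8}.

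Parts (d) and (e) are where the even case is actually \emph{easier} than the odd case. The crucial observation is that \eqref{eq:113} exhibits $\tilde T$ as $k[x]\otimes_k\mathcal O(C)$, a product of a rational curve and a smooth affine hyperelliptic curve. Both factors have trivial Brauer group (being regular affine curves over an algebraically closed field), and the vanishing $\B(\tilde T)=0$ then follows from \cite{F:Bgd} applied to this product; alternatively one can check directly from the ramification sequence \eqref{eq:105} that any class in $\B(\tilde T)$ would have to be unramified along every curve and hence trivial, since $\HH^1$ of the function fields of the horizontal and vertical fibers combine to force vanishing. Part (e) is then obtained by covering $\tilde X$ by finitely many affine patches of the same product form (the second patch of the blowup is obtained by interchanging the roles of $x$ and $y$, and the normalization is then a similar hyperelliptic $\mathbb A^1$-bundle) and gluing via the sequence \eqref{eq:105}, whose kernel is $\B$ on each open.

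Part (f) is the main obstacle, and in my view the only step requiring a genuinely new argument. Since $\pi$ restricts to an isomorphism $\tilde X - E \xrightarrow{\sim} X-P$, it suffices to compute $\B(\tilde X - E)$. The plan is to apply the smooth purity/Gysin sequence to the smooth pair $E\subset\tilde X$:
\[
0 \to \B(\tilde X)\to \B(\tilde X - E)\xrightarrow{\partial}\HH^1(E,\mathbb Q/\mathbb Z)\to \HH^3(\tilde X,\mathbb G_m)\ldotp
\]
The left-hand zero uses (e), and injectivity of $\partial$ together with surjectivity follows once one shows $\HH^3(\tilde X,\mathbb G_m)=0$; this in turn comes from Artin vanishing applied to the affine pieces covering $\tilde X$, combined with the Mayer--Vietoris spectral sequence. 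Because $E$ is irreducible, smooth, and of genus $g=\kappa-1=(n-2)/2$, one has $\HH^1(E,\mathbb Q/\mathbb Z)\cong (\mathbb Q/\mathbb Z)^{2g}=(\mathbb Q/\mathbb Z)^{(n-2)}$, which completes the proof. The hard part, as noted, is the vanishing of the obstruction $\HH^3(\tilde X,\mathbb G_m)$; one could alternatively finesse this by computing $\B(\tilde X - E)$ directly from \eqref{eq:105} and checking that the image of the ramification map along $E$ fills out all of $\HH^1(E,\mathbb Q/\mathbb Z)$, using that every cyclic unramified cover of $E$ is realized as the ramification of an explicit symbol algebra coming from $\B(K)$.
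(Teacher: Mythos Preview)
Your outline matches the paper's approach closely. Parts (a) and (b) are handled exactly as in Proposition~\ref{prop:2.8}, and for (d) the paper uses essentially your observation that $\tilde T\cong k[x]\otimes_k\mathcal O(C)$, but phrases it more directly: by \cite[Corollaire~(1.2)]{G:GBIII} one has $\B(C)=0$, and since $\tilde T$ is obtained from $\mathcal O(C)$ by adjoining an indeterminate, $\B(\tilde T)=0$ (then $\B(\tilde T)=\HH^2(\tilde T,\mathbb G_m)$ by \cite{H:WBr}). Your appeal to \cite{F:Bgd} for the product is misplaced; the polynomial-extension invariance of the Brauer group is the right tool.

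For (c), (e), and (f) the paper does not give any argument at all: it simply writes ``The rest follows from \cite{F:Bgd}.'' Your sketches are therefore an unpacking of that reference rather than a different route. Your argument for $\GG(T)=0$ via the strict transform of $I=(x,z)$ is correct in conclusion, but note that this strict transform is not visible in the chart \eqref{eq:113}; it lives in the companion $y$-chart, where one checks $\tilde I\cdot E=1$ directly. For (e), once you have (d) you can simply use that $\B(\tilde X)\hookrightarrow\B(L)$ factors through $\B(\tilde T)=0$, since $\tilde X$ is regular; no patching is required. For (f) your Gysin approach is in the right spirit, and the obstacle you flag (controlling $\HH^3(\tilde X,\mathbb G_m)$) is exactly what the machinery of \cite{F:Bgd} is designed to handle via the Leray spectral sequence for $\pi$; so the paper's citation is doing real work there rather than hiding a triviality.
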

 \begin{proof}
   Parts (a) and (b) are proved as in Proposition~\ref{prop:2.8}.  By
   \cite[Corollaire~(1.2)]{G:GBIII}, $\B(C) = 0$. Since $\tilde T$ is
   obtained from $\mathcal O(C)$ by adjoining an indeterminate,
   $\B(\tilde T) = 0$.  Since $\tilde T$ is regular, $\B(\tilde T) =
   \HH^2(\tilde T,\mathbb G_m)$ by \cite{H:WBr}.  This proves (d).
   The rest follows from \cite{F:Bgd}.
\end{proof}


For the Galois extension $R \rightarrow S$, Proposition~\ref{prop:2.12}
computes the terms in \eqref{eq:104}, the sequence of Chase, Harrison and Rosenberg.

\begin{proposition}
  \label{prop:2.12}
  Let $f$, $T$ and $S$ be as above.  The following are true.
  \begin{enumerate}[(a)]
  \item $\B^\smallsmile(S/R)= \B(S/R)$ has order two.
  \item $\Pic(S)$ is divisible and
    \[
    \HH^i(G,\Pic(S)) \cong
    \begin{cases}
      \Pic(S) \otimes \mathbb Z/2 = 0
      & \text{if $i$ is odd}\\
      \Pic(S)^G = {_2\Pic(S)} \cong (\mathbb Z/2)^{(n-2)} & \text{if
        $i$ is even}
    \end{cases}
    \]
  \item $S^\ast = k^\ast \times \langle z\rangle \times \langle f_2
    \rangle \times \dotsm \times \langle f_n \rangle$
  \item
    \[
    \HH^i(G,S^\ast) \cong
    \begin{cases}
      R^\ast & \text{if $i = 0$}\\
      \frac{\langle f_2\rangle}{\langle f_2^2\rangle} \times \dotsm
      \times \frac{\langle f_n\rangle}{\langle f_n^2\rangle} \cong
      (\mathbb Z/2)^{(n-1)}
      & \text{if $i = 2, 4, 6, \dots$}\\
      \langle 1 \rangle & \text{if $i=1, 3, 5, \dots $}
    \end{cases}
    \]
  \end{enumerate}
\end{proposition}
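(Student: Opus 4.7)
The plan is to prove parts (c) and (d) first from the explicit birational description of $S$ coming from \eqref{eq:113}, then derive (b) by analyzing the Picard group of a punctured hyperelliptic curve, and finally extract (a) from the Chase-Harrison-Rosenberg sequence \eqref{eq:104}.

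For (c) and (d), I start from the observation that the homomorphism $\phi : T \to \tilde T$ of \eqref{eq:113} becomes an isomorphism after inverting $x$, since $v = y/x$ and $w = z/x^\kappa$ lie in $T[x^{-1}]$. Because $\phi(z)^2 = x^n w^2$, the image of $f$ factors as $x^n \cdot v(a_3 - b_3 v)\dotsm(a_n - b_n v)$ in $\tilde T$, so inverting $f$ produces an isomorphism
\[
S \cong k[x, x^{-1}] \otimes_k \mathcal O(C'),
\]
where $C' = \bar C - \{W_1,\dots,W_n\}$ is the complement in the complete hyperelliptic model $\bar C$ of all of its Weierstrass points (the $n-1$ affine ones together with the point at infinity, which is Weierstrass because $\deg v(a_3-b_3v)\dotsm(a_n-b_nv) = n-1$ is odd). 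A Rosenlicht-style result for units in a tensor product over an algebraically closed field yields $S^*/k^* \cong \mathbb Z \cdot [x] \oplus \mathcal O(C')^*/k^*$, and a divisor-theoretic argument on $\bar C$ (using the machinery of Section~\ref{sec:3}) shows that $\mathcal O(C')^*/k^*$ is free of rank $n-1$, so $S^*/k^* \cong \mathbb Z^n$. Part (c) is then a check that $\{z, f_2, \dots, f_n\}$ is an alternative $\mathbb Z$-basis modulo $k^*$: the relation $z^2 = f_1 \dotsm f_n$ expresses $[f_1]$ in terms of the others, and a $\sigma$-argument combined with the independence of the $f_i$ in $R^*/k^*$ establishes freeness. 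Given (c), part (d) is routine cyclic-group cohomology: with $\sigma(z) = -z$ and $\sigma(f_i) = f_i$, direct computation on the basis gives $N(S^*) = k^* \langle f, f_2^2, \dots, f_n^2\rangle$ and $\Kernel N = \{\pm 1\} = \Image(1 - \sigma)$, and the claimed groups drop out of periodicity.

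For (b), $\mathbb A^1$-invariance of Picard applied to $S = k[x,x^{-1}] \otimes_k \mathcal O(C')$ gives $\Pic(S) = \Pic(C')$. The excision sequence $\mathbb Z^n \to \Pic(\bar C) \to \Pic(C') \to 0$ sends $e_i \mapsto [W_i]$. Using a Weierstrass point as basepoint to identify $\Pic(\bar C) = \mathbb Z \oplus J$, each class $[W_i]$ has image $(1, a_i)$ with $a_i \in {_2 J}$; by the hyperelliptic computations of Section~\ref{sec:3} the $a_i$ generate ${_2 J}$ subject to the single relation $\sum a_i = 0$, so the image is $\mathbb Z \oplus {_2 J}$ and $\Pic(S) \cong J/{_2 J}$. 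This is a quotient of the divisible abelian variety $J$, hence divisible. The involution $\sigma$ restricts to the hyperelliptic involution on $C'$ (from $\sigma(v) = v$, $\sigma(w) = -w$), which acts as $-1$ on $J$ and therefore on $J/{_2 J}$. For the cyclic group $G$ of order two acting by $-1$ on a module $M$, the standard formulas give $\HH^{2k+1}(G, M) \cong M/2M$ and $\HH^{2k}(G, M) \cong {_2 M}$. Divisibility kills the odd groups, and the identification ${_2(J/{_2 J})} = {_4 J}/{_2 J} \cong {_2 J} \cong (\mathbb Z/2)^{n-2}$ produces the even ones.

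Finally, part (a) follows from (b). Because $R$ is a UFD, $\Pic(R) = 0$ and the Chase-Harrison-Rosenberg sequence \eqref{eq:104} applies; since $\HH^1(G, \Pic(S)) = 0$ by (b), the map $\alpha_4$ is surjective, so $\B(S/R) = \B^\smallsmile(S/R)$, which has order two by Proposition~\ref{prop:2.2}(b). The main obstacle is the hyperelliptic Picard input for (b): one must know precisely that the Weierstrass-point differences in $J$ generate ${_2 J}$ with exactly one relation and that $\dim_{\mathbb F_2} {_2 J} = 2g = n - 2$, together with the fact that the hyperelliptic involution acts as $-1$ on $J$. These are the curve-theoretic facts that Section~\ref{sec:3} is set up to provide.
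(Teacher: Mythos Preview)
Your proof is correct and rests on the same identification $S \cong k[x,x^{-1}]\otimes_k\mathcal O(C')$ that the paper uses (there written as $U[w^{-1}]$ with $U=\tilde T[x^{-1}]$). The main organizational difference is in part~(b): the paper first computes $\HH^i(G,\Cl(U))$ for the intermediate ring $U=k[x,x^{-1}]\otimes_k\mathcal O(C)$ via Proposition~\ref{prop:3.1}, then passes to $\Pic(S)=\Cl(U[w^{-1}])$ through the Nagata localization sequence $0\to{_2\Cl(C)}\to\Cl(C)\to\Pic(S)\to 0$ and its long exact sequence in group cohomology. You instead identify $\Pic(S)\cong J/{_2J}$ directly from the excision sequence on the complete curve $\bar C$ and read off the cohomology from divisibility and the $(-1)$-action. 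Your route is more transparent about the actual structure of $\Pic(S)$ and bypasses the long exact sequence, at the cost of invoking ${_4J}/{_2J}\cong{_2J}$ explicitly; the paper's route packages everything into Proposition~\ref{prop:3.1} as a black box. For~(a) you use $\HH^1(G,\Pic S)=0$ to force $\alpha_4$ surjective, while the paper instead counts orders of $(\Pic S)^G$ and $\HH^2(G,S^\ast)$ on the other side of the CHR sequence; both are immediate from~(b) and~(d). One small point: ``$\mathbb A^1$-invariance of Picard'' is a slight abuse here since the factor is $\mathbb G_m$, not $\mathbb A^1$; the conclusion $\Pic(S)=\Pic(C')$ still holds because removing the principal divisor $x=0$ from $\mathbb A^1\times C'$ does not change $\Pic$.
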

\begin{proof}
  Let $\tilde T$ denote the ring on the right hand side of
  \eqref{eq:113}, and $U = \tilde T[x^{-1}]$ the localized ring.  Then
  $U = k[x,x^{-1}] \otimes \mathcal O(C)$, where $C$ is the affine
  hyperelliptic curve $w^2 = v(a_3-b_3v) (a_4-b_4v) \dotsm
  (a_n-b_nv)$.
  We know that $\Cl(C) = \Cl(\mathbb A^1
  \times C)$.  It follows that $\Cl(U) = \Cl(C)$.
  Proposition~\ref{prop:3.1} says that
  \begin{equation}
    \label{eq:115}
    \HH^i\left(G, \Cl(U)\right) =
    \begin{cases}
      0 & \text{if $i$ is odd}\\
      {_2\Cl(C)} \cong (\mathbb Z/2)^{(n-2)} & \text{if $i$ is even.}
    \end{cases}
  \end{equation}
  By Nagata's Theorem \cite[Theorem~7.1]{F:DCG}, $\beta : \Cl(U)
  \rightarrow \Cl(U[w^{-1}])$ is onto and the kernel of $\beta$ is
  generated by those prime divisors containing $w$.  In $U$ there are
  $n-1$ minimal primes of $w$. They are $\{I_i = (w,
  f_i(1,v))\}_{i=2}^n$.  The only minimal prime of $f_i(1,v)$ is
  $I_i$.  Upon localizing $U$ at $I_i$ we see that $w$ is a local
  parameter and the valuation of $f_i(1,v)$ is $2$. The divisors of
  $w$ and $f_i(1,v)$ are $\Div(w) = I_2+\dots+I_n$, $\Div(f_i(1,v)) =
  2I_i$.  The sequence
  \begin{equation}
    \label{eq:116}
    1 \rightarrow  U^\ast \rightarrow U[w^{-1}]^\ast \xrightarrow{\Div}
    \bigoplus_{i=2}^n \mathbb Z I_i \xrightarrow{\theta}
    \Cl(U) \xrightarrow{\beta}
    \Cl(U[w^{-1}]) \rightarrow 0
  \end{equation}
  is exact.  By Section~\ref{sec:3.2}, the kernel of $\beta$ is
  the subgroup ${_2\Cl(U)}$, an elementary $2$-group of rank $n-2$.
  From this we see that the sequence
  \begin{equation}
    \label{eq:117}
    1 \rightarrow  U^\ast \rightarrow  U[w^{-1}]^\ast \rightarrow
    \langle w \rangle
    \times   \langle f_3(1,v)  \rangle  \times \dotsm
    \times   \langle f_n(1,v)  \rangle
  \end{equation}
  is split-exact.  The ring $S$ is isomorphic to the localization
  $U[w^{-1}]$, so part (c) follows from \eqref{eq:117}. Apply
  \cite[Theorem~10.35]{R:IHA} to get part (d). The sequence
  \begin{equation}
    \label{eq:118}
    0 \rightarrow (\mathbb Z/2)^{(n-2)} \rightarrow
    \Cl(U) \xrightarrow{\beta}
    \Cl(U[w^{-1}])
    \rightarrow 0
  \end{equation}
  is exact.  Since $\Cl(C)$ is divisible,
  $\Cl(U)$ is divisible, so $\beta$ can be viewed as ``multiplication
  by $2$''.  On ${_2\Cl(U)}$ the group $G$ acts trivially.  The long
  exact sequence of cohomology associated to \eqref{eq:118} is
  \begin{equation}
    \label{eq:119}
    \dotsm 
    \xrightarrow{\partial^{i}}
    (\mathbb Z/2)^{(n-2)}
    \rightarrow
    \HH^i(G,   \Cl(U))
    \xrightarrow{\beta^i}
    \HH^i(G,   \Cl(U[w^{-1}]))
    \xrightarrow{\partial^{i+1}}
    (\mathbb Z/2)^{(n-2)}
    \rightarrow
    \dotsm
  \end{equation}
  where $\beta^i$ is the zero map.  Part (b) follows from
  \eqref{eq:115} and \eqref{eq:119}.  Proposition~\ref{prop:2.2} says
  $\B^\smallsmile(S/R)$ is a cyclic group of order two.  Together with
  sequence \eqref{eq:104}, part (b), and part (d), this proves part
  (a).
\end{proof}
\begin{proposition}
\label{prop:2.13}
  In the context of Proposition~\ref{prop:2.12}, the
  following are true.
  \begin{enumerate}[(a)]
  \item There is an exact sequence
    \[
    0 \rightarrow \mathbb Z/2 \rightarrow {_2\B(R)} \rightarrow
    {_2\B(T[f_1^{-1}])} \rightarrow 0
    \]
  \item As an abstract group, $\B(S)$ is isomorphic to
    $\HH^1(C,\mathbb Q/\mathbb Z) \oplus (\mathbb Q/\mathbb
    Z)^{(n-1)}$, which is isomorphic to $(\mathbb Q/\mathbb Z)^{(n-2)}
    \oplus (\mathbb Q/\mathbb Z)^{(n-1)}$.
  \item The cokernel of $\B(R) \rightarrow \B(S)$ is isomorphic to
    $\HH^1(C,\mathbb Q/\mathbb Z)$.
  \end{enumerate}
\end{proposition}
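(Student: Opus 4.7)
The geometric setup is $U := T[x^{-1}]\cong \mathbb G_m\times C$ via the isomorphism \eqref{eq:113}, so that $S = U[w^{-1}]$ is the complement in $\Spec U$ of the $n-1$ disjoint smooth prime divisors $I_\ell=(w,f_\ell(1,v))$ for $\ell=2,\dots,n$, each isomorphic to $\mathbb G_m=\Spec k[x,x^{-1}]$. Equivalently, $S\cong \mathbb G_m\times(C\setminus W)$ with $W$ the set of $n-1$ affine Weierstrass points of $C$. The plan is to prove (b) first by purity, and to read off (a) and (c) from it.

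For (b), I would apply cohomological purity to $\Spec S\hookrightarrow \Spec U$, producing
\[
0\to \B(U)\to \B(S)\to \bigoplus_{\ell=2}^n \HH^1(I_\ell,\mathbb Q/\mathbb Z)\to \HH^3(U,\mathbb G_m).
\]
The right-hand term vanishes: Artin vanishing gives $\HH^i(U,\mu_d)=0$ for $i>\dim U=2$ (since $U$ is smooth affine of dimension two over an algebraically closed field), and feeding this into the Kummer sequence forces $\HH^3(U,\mathbb G_m)=0$, so the residue map is surjective. A second purity application for $\mathbb A^1\times C\supset \mathbb G_m\times C$, together with $\B(\mathbb A^1\times C)=\B(C)=0$ (Tsen for an affine curve over $k$), identifies $\B(U)$ with $\HH^1(C,\mathbb Q/\mathbb Z)$; by Section~\ref{sec:3.2} this is $(\mathbb Q/\mathbb Z)^{n-2}$, since the genus of $C$ is $(n-2)/2$. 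Because $\B(U)$ is divisible and hence an injective $\mathbb Z$-module, the displayed sequence splits as abelian groups, proving (b).

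For (a), I would restrict the purity sequence to 2-torsion. The tame residue \eqref{eq:106} of $(f_i,f_j)_2$ at $I_\ell$ equals $f_i^{\nu_{I_\ell}(f_j)} f_j^{-\nu_{I_\ell}(f_i)}$, and because $\nu_{I_\ell}(f_i)=2\delta_{i\ell}\in\{0,2\}$ is always even, this is a square in $k(I_\ell)^\ast$, hence trivial in $\HH^1(I_\ell,\mathbb Z/2)$. Therefore the image of ${_2\B(R)}$ in ${_2\B(S)}$ lies in ${_2\B(U)}\cong(\mathbb Z/2)^{n-2}$. The kernel of the induced map is $\B(S/R)\cong\mathbb Z/2$ by Proposition~\ref{prop:2.12}(a), and since ${_2\B(R)}\cong(\mathbb Z/2)^{n-1}$ by Theorem~\ref{theorem:1.2}, counting orders forces ${_2\B(R)}\to {_2\B(T[f_1^{-1}])}$ to be surjective.

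Part (c) is a rank count over divisible torsion groups. By Theorem~\ref{theorem:1.2}, $\B(R)\cong(\mathbb Q/\mathbb Z)^{n-1}$, and by (b), $\B(S)\cong(\mathbb Q/\mathbb Z)^{2n-3}$. The kernel of $\B(R)\to \B(S)$ is the finite group $\B(S/R)\cong\mathbb Z/2$, so the image is divisible of $\ell$-rank $n-1$ at every prime $\ell$, and the cokernel is a divisible torsion group of $\ell$-rank $(2n-3)-(n-1)=n-2$; hence it is isomorphic to $(\mathbb Q/\mathbb Z)^{n-2}\cong\HH^1(C,\mathbb Q/\mathbb Z)$. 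The main technical obstacle is the surjectivity of the residue map in the purity sequence used for (b), which rests on the Artin-vanishing input above; once that is in hand, the remaining arguments are essentially bookkeeping via Proposition~\ref{prop:2.12}.
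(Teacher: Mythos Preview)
Your argument is essentially correct and runs parallel to the paper's, but it is packaged more structurally. One technical slip: Artin vanishing for $\mu_d$ gives $\HH^3(U,\mu_d)=0$, and Kummer then only shows that $\HH^3(U,\mathbb G_m)$ is uniquely divisible, hence torsion-free, not that it vanishes. That still suffices, since $\HH^1(Z,\mathbb Q/\mathbb Z)$ is torsion and any map from a torsion group to a torsion-free group is zero; so the residue map is surjective as you need. Alternatively, simply cite \cite[Corollary~1.4]{F:Bgl} as the paper does for the analogous sequence \eqref{eq:414}.

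The paper's route differs chiefly in how (a) and (c) are extracted. Rather than observing abstractly that the valuations $\nu_{I_\ell}(f_i)=2\delta_{i\ell}$ are even and then counting orders, the paper computes the tame symbol of each generator $(x,f_i)_m$ explicitly along the two relevant divisors of $\tilde T$: along the Weierstrass lines $I_\ell$ it obtains the class of $x^{2/m}$ in $\HH^1(k(x),\mathbb Z/m)$, and along the exceptional curve $J\cong C$ it obtains the class of $v^{-1/m}$ in $\HH^1(K(C),\mathbb Z/m)$. This buys two things your argument does not: it shows directly that the composite $\B(R)\to\B(S)\to\bigoplus_\ell\HH^1(I_\ell,\mathbb Q/\mathbb Z)$ is surjective (not merely that the 2-torsion residues vanish), and it identifies the cokernel in (c) canonically as $\HH^1(C,\mathbb Q/\mathbb Z)$ modulo its $2$-torsion, rather than only as an abstract group of the same rank. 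Your rank count in (c) is valid because the image of $\B(R)$ in $\B(S)$ is divisible (being a quotient of the divisible group $\B(R)\cong(\mathbb Q/\mathbb Z)^{n-1}$ by a finite subgroup) and hence splits off, but the paper's explicit ramification computation yields the finer statement.
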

  \begin{proof}
  By \cite[Corollary~1.4]{F:Bgl}, ${\B(U)} \cong \HH^1(C,\mathbb
  Q/\mathbb Z)$. It follows that ${_2\B(U)}$ is isomorphic to
  $\HH^1(C,\mathbb Z/2)
  \cong (\mathbb Z/2)^{(2\kappa -2)}$.  Let $L$ denote the quotient
  field of $S$. We view $S$, $U$ and $\tilde T$ as subrings of $L$,
  all birational to each other.

  The Brauer group of $R$ is generated by the symbol algebras
  $(x,f_i)_m$. To determine the image of $\B(R) \rightarrow \B(S)$, we
  first consider the symbol $\Lambda = (x,v)_m$. Assume that we have
  extended the scalars so that $\Lambda$ is a central simple
  $L$-algebra.  Since $\B(\tilde T)=0$, we use the exact sequence
  \eqref{eq:105} to measure the ramification of $\Lambda$ at the prime
  divisors of $\tilde T$.  We need only consider primes in the divisor
  $(x) + (v)$ on $\Spec{\tilde T}$.  Any prime of $\tilde T$
  containing $v$ must also contain $w$, so let $I = (v,w)$.  The
  element $x \in \tilde T$ is irreducible and $J = (x)$ is a prime
  ideal.  The ramification map $a$ agrees with the tame symbol
  \eqref{eq:106} at the divisors $I$ and $J$.

  To compute the ramification $a_I(\Lambda)$ at the prime $I$, we
  notice that $w$ is a local parameter in the local ring $\tilde T_I$.
  The valuation of $v$ is $2$, the valuation of $x$ is $0$. The tame
  symbol becomes $x^2$. The residue field at $I$ is the quotient field
  of $\tilde T/I$, which we identify with $k(x)$.  Therefore
  \begin{equation}
    \label{eq:120}
    a_I(\Lambda) = k(x)(x^{2/m})\ldotp
  \end{equation}
  In $\tilde T/I$ the element $x$ belongs to only one prime, so the
  ramification map $r$ is
  \begin{equation}
    \label{eq:121}
    r_p\bigl(a_I(\Lambda)\bigr) =
    \begin{cases}
      2 & \text{if $p = (x)$} \\
      0 & \text{otherwise.}
    \end{cases}
  \end{equation}
  Therefore \eqref{eq:120} represents an element of $\HH^1(k(x),\mathbb
  Z/m)$ of order $m/ \gcd(m,2)$.
  
  Modulo $J = (x)$, $\tilde T$ is isomorphic to $k[v,w]/(w^2
  -v(a_3-b_3v)(a_4-b_4v) \dotsm (a_n-b_nv))$, the affine coordinate
  ring of the non-singular curve $C$. By $C$ we also denote the
  corresponding divisor on $\Spec{\tilde T}$, and by $K(C)$ the
  function field of $C$.  In $\mathcal O_C$, $x$ is a local parameter
  and has valuation $1$. The valuation of $v$ is $0$. The tame symbol
  \eqref{eq:106} becomes $v^{-1}$.  Therefore
  \begin{equation}
    \label{eq:122}
    a_J(\Lambda) =
    K(C)(v^{-1/m})\ldotp
  \end{equation}
  To compute $r\bigl(K(C)(v^{-1/m})\bigr)$, notice that the divisor of
  $v$ on $C$ is $2 P_2$, where $P_2$ is the prime divisor defined by
  $v = w = 0$.  Therefore,
  \begin{equation}
    \label{eq:123}
    r_p\bigl(a_J(\Lambda)\bigr) =
    \begin{cases}
      -2 & \text{if $p = P_2$} \\
      0 & \text{otherwise.}
    \end{cases}
  \end{equation}
  For all $m$, \eqref{eq:122} represents an element of order $m$ in
  $\HH^1(K(C),\mathbb Z/m)$.  Notice that for $m = 2$, the quadratic
  extension $a_J(\Lambda)$ corresponds to the element of ${_2\Pic(C)}$
  generated by the prime divisor $P_2$.

  To prove (a), assume $m= 2$.  As in the proof of
  Proposition~\ref{prop:2.12}, let $U = \tilde T[x^{-1}]$.  By the
  computations above we see that $\Lambda$ ramifies only along the
  curve $C$. Therefore, by sequence \eqref{eq:105} applied to $U$, we
  see that the image of $\Lambda$ in $\B(L)$ lands in the image of
  $\B(U) \rightarrow \B(L)$.  By
  Section~\ref{sec:3.2}, the group
  ${_2\Pic(C)}$ is generated by the primes $(v,w)$, $(a_3-b_3 v,w)$,
  \dots, $(a_n-b_n v,w)$.  Consider the subgroup $H$ of order
  $2^{n-2}$ in ${_2\B(R)}$ generated by the symbols $(x,y)_2$,
  $(x,f_3)_2$, \dots, $(x,f_n)_2$.  Iterating the previous argument
  for $\Lambda_3= (x,f_3)_2$, \dots, $\Lambda_n= (x,f_n)_2$, shows
  that the image of $H$ under $\B(R) \rightarrow \B(L)$ is equal to
  the image of ${_2\B(U)} \rightarrow \B(L)$.

  Since $S$ is isomorphic to the localization $\tilde T[x^{-1},
  w^{-1}]$, we view $\Spec{S}$ as an open in $\Spec{\tilde T} =
  \mathbb A^1 \times C$.  The closed complement $\Spec{\tilde T} -
  \Spec{S}$ is $C + L_2 + \dots + L_{n}$, each $L_i$ being a copy of
  $\mathbb A^1$. The intersection numbers are $L_i \ldotp L_j = 0$,
  $L_i \ldotp C = 1$. The formula in part (b) for $\B(S)$ follows from
  \cite[Corollary~1.6]{F:Bgl}.  By \eqref{eq:120} and \eqref{eq:121},
  the composite
  \[
  \B(R) \rightarrow \B(S) \rightarrow \bigoplus_{i=2}^n\HH^1(\mathcal
  O(L_i)[x^{-1}], \mathbb Q/\mathbb Z)
  \]
  is onto.  By \eqref{eq:122} and \eqref{eq:123}, the cokernel of $\B(R)
  \rightarrow \B(S)$ is isomorphic to $\HH^1(C, \mathbb Q/\mathbb Z)$
  modulo the subgroup of elements annihilated by $2$, which is (c).
\end{proof}
\begin{proposition}
  \label{prop:2.14}
  Assume we are in the context of Proposition~\ref{prop:2.12}. Then
  ${_2\Cl(T)}$ is isomorphic to $(\mathbb Z/2)^{(n-1)}$, is
  generated by the divisor classes of the prime ideals $(f_1,z)$,
  \dots, $(f_{n-1},z)$, and
  \[
  \HH^i(G,\Cl(T)) \cong
  \begin{cases}
    \Cl(T)\otimes \mathbb Z/2 \cong \mathbb Z/2
    & \text{if $i$ is odd}\\
    \Cl(T)^G = {_2\Cl(T)} \cong (\mathbb Z/2)^{(n-1)} & \text{if $i$
      is even.}
  \end{cases}
  \]
\end{proposition}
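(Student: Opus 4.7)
The starting point is the Nagata sequence \eqref{eq:114}, namely $0 \to \mathbb Z I_1 / 2\mathbb Z I_1 \to \Cl(T) \to \Cl(T[x^{-1}]) \to 0$, combined with the identification $\Cl(T[x^{-1}]) = \Cl(\tilde T) = \Cl(C)$ furnished by the desingularization \eqref{eq:113}. Since $C$ is a smooth affine hyperelliptic curve with a unique point at infinity, $\Cl(C) \cong \Pic^0(\bar C)$, which is a divisible group. The plan is first to compute ${_2\Cl(T)}$ by chasing $2$-torsion through this extension, next to show that $\sigma$ acts by $-1$ on \emph{all} of $\Cl(T)$, and finally to read off the cohomology by Tate periodicity for $G = \mathbb Z/2$.

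To compute ${_2\Cl(T)}$, I would apply the snake lemma for multiplication by $2$ on the Nagata sequence. By Section~\ref{sec:3.2}, ${_2\Cl(C)} \cong (\mathbb Z/2)^{(n-2)}$ is generated by the Weierstrass classes $P_2, \dots, P_n$ subject to $P_2 + \dots + P_n \sim 0$ (the divisor of $w$ on $C$). Tracing the map \eqref{eq:113}, each $P_i$ is the image of the prime $I_i = (f_i, z)$ on $\Spec T$, and each $I_i$ is already $2$-torsion in $\Cl(T)$ because $\Div(f_i) = 2 I_i$. The connecting homomorphism ${_2\Cl(C)} \to \mathbb Z/2 \cdot [I_1]$ therefore vanishes, yielding $0 \to \mathbb Z/2 \to {_2\Cl(T)} \to (\mathbb Z/2)^{(n-2)} \to 0$, an extension of $\mathbb F_2$-vector spaces, whence ${_2\Cl(T)} \cong (\mathbb Z/2)^{(n-1)}$. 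The relation $\Div(z) = I_1 + \dots + I_n$ lets me drop $I_n$, leaving $I_1, \dots, I_{n-1}$ as the generating set. As a byproduct, divisibility of $\Cl(C)$ gives $\Cl(C)/2\Cl(C) = 0$, and the right end of the same snake sequence then yields $\Cl(T)/2\Cl(T) \cong \mathbb Z/2$, generated by $[I_1]$.

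Next I pin down the $G$-action. Each $I_i$ is $\sigma$-invariant as an ideal, so $\sigma$ acts trivially on ${_2\Cl(T)}$. Under the birational map \eqref{eq:113}, $\sigma$ restricts to the hyperelliptic involution $w \mapsto -w$ on $C$, which acts as $-1$ on $\Pic^0(\bar C) = \Cl(C)$. The main technical point, which I expect to be the principal obstacle, is upgrading this to the statement that $\sigma = -1$ on \emph{all} of $\Cl(T)$, not merely on the quotient $\Cl(C)$. For this I would exploit divisibility of $\Cl(C)$: for any $[D] \in \Cl(T)$, write its image in $\Cl(C)$ as $2\bar E$, lift $\bar E$ to $[E] \in \Cl(T)$, and observe that $[D] - 2[E] = \varepsilon [I_1]$ for some $\varepsilon \in \{0,1\}$. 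Then
\[
(1+\sigma)[D] = 2(1+\sigma)[E] + 2\varepsilon [I_1] = 0,
\]
because $(1+\sigma)[E] \in \mathbb Z/2 \cdot [I_1]$ is annihilated by $2$.

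Once $\sigma = -1$ on $\Cl(T)$ is established, the norm $N = 1+\sigma$ vanishes identically and $1-\sigma$ is multiplication by $2$. Consequently $\Cl(T)^G = \ker(2) = {_2\Cl(T)}$, which handles $i = 0$; for even $i \geq 2$, $\HH^i(G,\Cl(T)) = \Cl(T)^G/N\Cl(T) = {_2\Cl(T)}$. For odd $i$, $\HH^i(G,\Cl(T)) = \ker(N)/(1-\sigma)\Cl(T) = \Cl(T)/2\Cl(T) \cong \mathbb Z/2$, which is $\Cl(T) \otimes \mathbb Z/2$ as asserted. Tate periodicity for the cyclic group $G$ of order two propagates these to every degree.
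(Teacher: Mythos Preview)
Your argument is correct. The computation of ${_2\Cl(T)}$ and of $\Cl(T)\otimes\mathbb Z/2$ via the snake lemma for multiplication by $2$ on the Nagata sequence is exactly what the paper does in \eqref{eq:124}--\eqref{eq:125}, so that part is essentially the same.

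The cohomology computation, however, is handled differently. The paper feeds the short exact sequence $0\to\mathbb Z/2\to\Cl(T)\to\Cl(T[f_1^{-1}])\to 0$ into the long exact sequence of $G$-cohomology \eqref{eq:126}, imports the values $\HH^i(G,\Cl(C))$ from Proposition~\ref{prop:3.1}, and then reads off $\HH^i(G,\Cl(T))$ by showing the connecting maps $\partial^1,\partial^3,\ldots$ vanish. You instead prove the stronger structural fact that $\sigma$ acts as $-1$ on all of $\Cl(T)$, using divisibility of $\Cl(C)$ to write any class as $2[E]+\varepsilon[I_1]$ and observing that $1+\sigma$ kills both pieces; from this the cohomology follows immediately via the standard Tate formulas $\HH^{\text{even}}=\ker N/\Image N$ and $\HH^{\text{odd}}=\ker N/(1-\sigma)$. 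Your route gives more: it pins down the $G$-module structure of $\Cl(T)$ itself, not just its cohomology, and avoids the bookkeeping of the long exact sequence. The paper's route is more mechanical and does not require the lifting trick, relying instead on the already-packaged Proposition~\ref{prop:3.1}.
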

\begin{proof}
  In $T$ the only height one prime containing $f_j$ is $I_j =
  (f_j,z)$. A local parameter for the discrete valuation ring
  $T_{I_j}$ is $z$. One checks that the divisor of $f_j$ is $2 I_j$.
  Using \eqref{eq:113}, we see that $T[f_1^{-1}]$ is isomorphic to
  $k[x,x^{-1}] \otimes \mathcal O(C)$, where $C$ is the affine
  hyperelliptic curve $w^2 = v(a_3 - b_3v) \dotsm (a_n - b_n v)$.  The
  group $\mathcal O^\ast(C)$ of units on $C$ is equal to $k^\ast$,
  because for a nonsingular completion $\bar C$ of $C$ there is only
  one point at infinity $\bar C - C$.
  It follows that the group of
  units of $T[f_1^{-1}]$ is $k^\ast \times \langle f_1 \rangle$ and
  the class group of $T[f_1^{-1}]$ is isomorphic to the class group of
  $C$.  The groups $\HH^i(G,\Cl(T[f_1^{-1}]))$ are as in
  \eqref{eq:115}.  The genus of $C$ is $\kappa-1$, where $\kappa =
  n/2$.  By Section~\ref{sec:3.2}, the group ${_2\Cl(C)}$, has
  order $2^{n-2}$ and the divisors $(v,w), (a_3-b_3v,w), \dots,
  (a_{n-1}-b_{n-1}v,w)$ are a $\mathbb Z/2$-basis for ${_2\Cl(C)}$.
  Nagata's sequence \cite[Theorem~7.1]{F:DCG} becomes
  \begin{equation}
    \label{eq:124}
    0\rightarrow \mathbb Z/2 \xrightarrow{\alpha} \Cl(T)
    \xrightarrow{\beta} \Cl\left(T[f_1^{-1}]\right) \rightarrow 0
  \end{equation}
  where $\alpha$ maps $1$ to the divisor class generated by $I_1$.
  Under $\beta$, we have the assignments of divisor classes $I_2
  \mapsto (v,w)$, $I_3 \mapsto (a_3-b_3 v,w)$, \dots, $I_n \mapsto
  (a_n-b_n v,w)$.  We see that the divisors $I_1, \dots, I_n$ generate
  a subgroup of $\Cl{(T)}$ of order $2^{n-1}$. The diagram of abelian
  groups
  \begin{equation}
    \label{eq:125}
    \begin{CD}
      0 @>>> \mathbb Z/2 @>{\alpha}>> \Cl(T) @>{\beta}>>
      \Cl\left(T[f_1^{-1}]\right) @>>> 0\\
      @.  @VV{2}V @VV{2}V
      @VV{2}V\\
      0 @>>> \mathbb Z/2 @>{\alpha}>> \Cl(T) @>{\beta}>>
      \Cl\left(T[f_1^{-1}]\right) @>>> 0
    \end{CD}
  \end{equation}
  is commutative, where the vertical maps are ``multiplication by
  $2$''. The group $\Cl\left(T[f_1^{-1}]\right)$ is divisible.  The
  Snake Lemma and the previous computations imply that ${_2\Cl(T)}
  \cong (\mathbb Z/2)^{(n-1)}$ and $\Cl(T)\otimes \mathbb Z/2 \cong
  \mathbb Z/2$.  A $\mathbb Z/2$-basis for ${_2\Cl(T)}$ is the set of
  divisor classes of $I_1$, \dots, $I_{n-1}$.  The long exact sequence
  of cohomology associated to \eqref{eq:124} is
  \begin{multline}
    \label{eq:126}
    0 \rightarrow \mathbb Z/2 \xrightarrow{\alpha} \Cl(T)^G
    \xrightarrow{\beta} \Cl\left(T[f_1^{-1}]\right)^G \\
    \xrightarrow{\partial^1} \mathbb Z/2 \xrightarrow{\alpha^1}
    \HH^1(G, \Cl(T)) \xrightarrow{\beta^1} \HH^1\left(G,
      \Cl\left(T[f_1^{-1}]\right)\right) \\
    \xrightarrow{\partial^2} \mathbb Z/2 \xrightarrow{\alpha^2}
    \HH^2(G, \Cl(T)) \xrightarrow{\beta^2}
    \HH^2\left(G, \Cl\left(T[f_1^{-1}]\right)\right) \\
    \xrightarrow{\partial^3} \mathbb Z/2 \xrightarrow{\alpha^3}
    \HH^3(G, \Cl(T)) \xrightarrow{\beta^3} \HH^3\left(G,
      \Cl\left(T[f_1^{-1}]\right)\right) \xrightarrow{\partial^4}
    \dotsm
  \end{multline}
  The terms with $T[f_1^{-1}]$ come from \eqref{eq:115}.  Since $\beta
  : {_2\Cl(T)} \rightarrow {_2\Cl\left(T[f_1^{-1}]\right)}$ is onto,
  we see that $\Cl(T)^G = {_2\Cl(T)}$ and $\alpha^1$ is an
  isomorphism.  Periodicity implies $\alpha^3$ is an isomorphism.
\end{proof}
\begin{remark}
  \label{remark:2.15}
  Propositions~\ref{prop:2.2} and \ref{prop:2.14} imply that $\B(S/R)
  \cong \HH^1(G,\Cl(T))$.
\end{remark}
\begin{proposition}
  \label{prop:2.16}
  In the context of Proposition~\ref{prop:2.12}, let $X = \Spec{T}$ and
  let $P$ be the singular point of $X$.  The sequence
  \[
  0 \rightarrow \mathbb Z/2 \rightarrow {_2\B(R)} \rightarrow
  {_2\B(X-P)} \rightarrow 0
  \]
  is exact, where ${_2\B(X-P)}$ is viewed as a subset of $\B(S)$.
\end{proposition}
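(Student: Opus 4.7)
The strategy is to factor the natural map through $\B(S)$ and identify the image of ${_2\B(R)}$ there. The key observation is that each prime divisor of $X$ lying over the ramification locus $V(f)\subset\mathbb A^2$ has $z$ as a local parameter, so every $f_k$ has \emph{even} valuation at such a prime, and every symbol $(f_i,f_j)_2$ becomes unramified.

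First I would collect the relevant orders. Theorem~\ref{theorem:1.2} gives ${_2\B(R)}\cong(\mathbb Z/2)^{(n-1)}$, generated by the symbol algebras $(f_i,f_j)_2$. Proposition~\ref{prop:2.12}(a) gives $\B(S/R)\cong\mathbb Z/2$; since this is $2$-torsion it coincides with ${_2\B(S/R)}$, the full kernel of ${_2\B(R)}\to{_2\B(S)}$. Proposition~\ref{prop:2.11}(f) gives ${_2\B(X-P)}\cong(\mathbb Z/2)^{(n-2)}$.

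Next I would verify that each generator $(f_i,f_j)_2$ of ${_2\B(R)}$, viewed as a class in $\B(L)$ via $\B(R)\to\B(S)\hookrightarrow\B(L)$, is unramified along every prime divisor of $X-P$. Since $X-P$ is regular, sequence~\eqref{eq:105} then places the class in $\B(X-P)\subseteq\B(S)$. The prime divisors of $X-P$ fall into two types: (i) those not meeting $V(f)$, which lie in $\Spec S$, and are harmless because the classes are in the image of $\B(R)\to\B(S)$ and $\Spec S\to\Spec R$ is étale Galois; and (ii) the primes $I_k=(f_k,z)\subset T$ lying over the lines $V(f_k)\subset\mathbb A^2$. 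At $I_k$ the local parameter is $z$, and $\nu_{I_k}(f_k)=2$, $\nu_{I_k}(f_m)=0$ for $m\neq k$. The tame symbol \eqref{eq:106} of $(f_i,f_j)_2$ at $I_k$ therefore equals $1$, $f_i^{\pm 2}$, or $f_j^{\pm 2}$, which is a square in $K(I_k)^\ast$ and thus trivial in $\HH^1(K(I_k),\mathbb Z/2)$.

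Finally I would conclude by a dimension count. The image of ${_2\B(R)}\to{_2\B(S)}$ has order $|{_2\B(R)}|/|{_2\B(S/R)}|=2^{n-1}/2=2^{n-2}$; by the previous step it is contained in ${_2\B(X-P)}$, which also has order $2^{n-2}$. Hence the image equals ${_2\B(X-P)}$, and combining with the identification of the kernel yields the asserted short exact sequence. The only real obstacle is bookkeeping in the middle step: making sure every prime divisor of $X-P$ is classified as (i) or (ii), and that the tame symbol is indeed a square at each $I_k$. Both reduce to the single observation that $z$ is a uniformizer at $I_k$ and that the pullback of the divisor of $f$ to $X$ is $\sum_k 2I_k$.
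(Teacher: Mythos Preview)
Your argument is correct and follows a genuinely different route from the paper's.

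The paper proves the proposition by invoking Proposition~\ref{prop:2.13}(a) for two affine opens $X_i = \Spec T[f_i^{-1}]$ ($i=1,2$), observing that both maps ${_2\B(R)}\to{_2\B(X_i)}$ have the \emph{same} image inside $\B(S)$, and then applying the Mayer--Vietoris sequence for the cover $X-P = X_1\cup X_2$. Your approach bypasses Proposition~\ref{prop:2.13} and Mayer--Vietoris entirely: you check directly via the tame symbol that every generator $(f_i,f_j)_2$ is unramified at each prime $I_k=(f_k,z)$ (the only primes of $X-P$ outside $\Spec S$), so the image of ${_2\B(R)}$ lands in $\B(X-P)$; then you finish with an order count using Proposition~\ref{prop:2.11}(f) and Proposition~\ref{prop:2.12}(a). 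This is more self-contained, since Proposition~\ref{prop:2.13}(a) itself rests on a fairly elaborate ramification computation on the resolution $\tilde T$. The paper's route, on the other hand, reuses work already done and shows how the result for $X-P$ is literally glued from the affine pieces.

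One small technical remark: you invoke sequence~\eqref{eq:105} for $X-P$, but that sequence is stated in the paper only for \emph{affine} nonsingular surfaces, and $X-P$ is not affine. What you actually need is the purity statement that for a regular integral surface the Brauer group coincides with the classes in $\B(L)$ unramified at every height-one prime; this is standard (and implicit in \cite{AM:See}), but it would be worth saying so rather than pointing to~\eqref{eq:105}.
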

\begin{proof}
  Let $X_i$ be the open in $X$ defined by $f_i \neq 0$. By
  Proposition~\ref{prop:2.13}(a),
  \[
  0 \rightarrow \mathbb Z/2 \rightarrow {_2\B(R)} \rightarrow
  {_2\B(X_i)} \rightarrow 0
  \]
  is exact for $i = 1, 2$. Therefore, as a subsets of $\B(S)$,
  ${_2\B(X_1)} = {_2\B(X_2)}$. The ideal $(f_1, f_2)$ is primary for
  the maximal ideal $(x,y, z)$. Therefore, $X_1 \cup X_2 = X - P$. The
  Mayer-Vietoris sequence
  \[
  0 \rightarrow \B(X_1 \cup X_2) \rightarrow \B(X_1) \oplus \B(X_2)
  \rightarrow \B(X_1 \cap X_2)
  \]
  is exact \cite[Exercise~III.2.24]{M:EC}.
\end{proof}
 
\section{An affine double plane ramified along a hyperelliptic curve}
\label{sec:4}

Let $f = y^2 - p(x)$, where $p(x) \in k[x]$ is monic, and has degree
$d \geq 2$.  Let $A =k[x,y]$, $T = A[z]/(z^2-f)$, $R = A[f^{-1}]$, $S
= T[z^{-1}]$.  Let $p(x) = \ell_1^{e_1} \dotsm \ell_v^{e_v}$ be the
unique factorization of $p(x)$. Assume $\ell_1, \dots, \ell_v$ are
distinct and of the form $\ell_i = x-\alpha_i$.  Let $D = \gcd(e_1,
\dots, e_v)$.  Let $F = Z(y^2 - p(x))$ be the subset of $\mathbb A^2$.
Then $F$ is nonsingular if and only if each $e_i = 1$.  The singular
locus of the curve $F$ is equal to the set of points $\{ (\alpha_i, 0)
\mid  e_i \geq 2\}$.  Let $P_i$ be the point on $\Spec {T}$ where
$x = \alpha_i$, $y = z = 0$.
  
\begin{proposition}
  \label{prop:4.1}
  In the above context, the following are true.
\begin{enumerate}[(a)]
\item The singular locus of $T$ is equal to the set $\{ P_i \mid 
  e_i \geq 2\}$.
\item $T$ is rational.
\item $\Cl(T) \cong \mathbb Z/D \oplus \mathbb Z^{(v-1)}$.
\item $k^\ast = T^\ast$.
\item If $e_i \geq 2$, the singular point $P_i$ of $T$ is a rational
  double point of type $A_n$, where $n = e_i -1$.
\item The natural homomorphism $\Cl(T_{P_i}) \rightarrow \Cl(\hat
  T_{P_i})$ is an isomorphism. Both groups are cyclic of order $e_i$.
\end{enumerate}
\end{proposition}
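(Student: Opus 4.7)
The plan is to extract all six statements from one structural computation: analyze the divisors of $\ell_i = x-\alpha_i$ and $y \pm z$ in the normal domain $T$, apply Nagata's theorem after inverting $p(x)$, and then pass to the henselian/complete local picture at each singular point. Normality is automatic: $f = y^2 - p(x)$ is square-free in $k[x,y]$ for any nonzero $p$, by a brief degree-in-$y$ argument. Parts (a) and (b) are then immediate. The Jacobian criterion applied to $z^2 - y^2 + p(x)$ gives singular points exactly where $y = z = 0$, $p(x) = 0$, and $p'(x) = 0$; these are the $P_i$ with $e_i \geq 2$. For rationality, the substitution $u = y-z$, $v = y+z$ rewrites the defining equation as $uv = p(x)$, whose function field is the rational field $k(x,u)$.

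For (c) and (d), the key observation is that $T/(\ell_i) \cong k[y,z]/((z-y)(z+y))$, so the minimal primes of $\ell_i$ in $T$ are precisely $I_i^+ = (\ell_i, y-z)$ and $I_i^- = (\ell_i, y+z)$. A short local computation in $T_{I_i^+}$, using that $y+z$ is a unit there, shows $\ell_i$ is a uniformizer with $\nu_{I_i^+}(\ell_i) = 1$ and $\nu_{I_i^+}(y-z) = e_i$; the dual statements hold at $I_i^-$. The localization $T[p(x)^{-1}]$ equals $k[x, \ell_1^{-1}, \ldots, \ell_v^{-1}][u, u^{-1}]$, a Laurent polynomial ring over a UFD, hence factorial with unit group $k^\ast$ times the free abelian group on $\ell_1, \ldots, \ell_v, u$. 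Nagata's theorem, applied exactly as in Propositions~\ref{prop:2.4} and \ref{prop:2.14}, then presents $\Cl(T)$ on the generators $\{I_i^\pm\}$ modulo the relations $\Div(\ell_i) = I_i^+ + I_i^- = 0$ and $\Div(y-z) = \sum_i e_i I_i^+ = 0$. Eliminating $I_i^-$, the quotient $\mathbb{Z}^v / \mathbb{Z}(e_1,\ldots,e_v)$ has Smith normal form $\mathbb{Z}/D \oplus \mathbb{Z}^{v-1}$, giving (c). Part (d) follows from the same description of units in $T[p(x)^{-1}]$: the conditions $\nu_{I_i^\pm} \geq 0$ forced on both a unit and its inverse immediately kill every nontrivial monomial in the $\ell_i$ and $u$.

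For (e) I would complete at $P_i$ and make a formal change of variables. Writing $x' = x-\alpha_i$ and $p(x) = x'^{e_i} q(x')$ with $q(0) \neq 0$, set $U = z-y$, $V = z+y$, and $W = x' \cdot (-q)^{1/e_i}$, where the $e_i$-th root exists in $k[[x']]$ under the paper's blanket convention on the characteristic of $k$. The equation $z^2 - y^2 = -p(x)$ becomes $UV = W^{e_i}$ in $k[[U,V,W]]$, the standard presentation of the rational double point of type $A_{e_i-1}$. For (f), the divisor computation on this $A_n$ model yields $\Cl(\hat T_{P_i}) \cong \mathbb{Z}/e_i$. Localizing the surjection $\Cl(T) \twoheadrightarrow \Cl(T_{P_i})$ kills the classes $[I_j^\pm]$ for $j \neq i$, since those primes do not pass through $P_i$, which collapses $\mathbb{Z}^v/\mathbb{Z}(e_1,\ldots,e_v)$ to $\mathbb{Z}/e_i$. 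The canonical map $\Cl(T_{P_i}) \to \Cl(\hat T_{P_i})$ is injective by Mori's theorem, as invoked in Proposition~\ref{prop:2.8}, and an injection between cyclic groups of the same finite order is an isomorphism. The main obstacle is the change of variables in (e): extracting an $e_i$-th root of $-q$ requires the characteristic of $k$ to be prime to $e_i$, which is covered by the paper's standing convention but should be flagged explicitly when writing out the detailed argument.
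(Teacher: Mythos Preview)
Your proposal is correct and follows essentially the same route as the paper: the substitution $u=z-y$, $v=z+y$ for rationality, Nagata's theorem for $\Cl(T)$ and $T^\ast$, the formal coordinate change to the $A_{e_i-1}$ normal form for (e), and Mori's theorem for (f). The only cosmetic difference is that the paper inverts $z-y$ rather than $p(x)$, obtaining the presentation $\Cl(T)\cong\bigl(\bigoplus_{i=1}^v\mathbb Z I_i\bigr)/\mathbb Z(e_1I_1+\dots+e_vI_v)$ directly with $v$ generators and one relation; your localization yields $2v$ generators $I_i^{\pm}$ and $v+1$ relations, which collapse to the same thing after eliminating $I_i^-=-I_i^+$.

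Two small remarks. First, your flagged characteristic concern in (e) is avoidable: rather than setting $W=x'(-q)^{1/e_i}$, absorb the unit into one of the other coordinates, say $U'=-U/q$, so that $U'V=x'^{\,e_i}$ without any root extraction; this works whenever $\operatorname{char}k\neq 2$, which is all the paper assumes. Second, in (f) your phrasing ``collapses $\mathbb Z^v/\mathbb Z(e_1,\dots,e_v)$ to $\mathbb Z/e_i$'' only shows $\Cl(T_{P_i})$ is a \emph{quotient} of $\mathbb Z/e_i$, since localization could in principle kill more; the paper (and your own ingredients) close this by observing that $I_i^+$ maps to a generator of $\Cl(\hat T_{P_i})\cong\mathbb Z/e_i$, so the composite $\Cl(T)\to\Cl(T_{P_i})\to\Cl(\hat T_{P_i})$ is onto, and Mori's injectivity then forces both arrows to be isomorphisms onto $\mathbb Z/e_i$.
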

\begin{proof}
  The singular locus of $T$ consists of the set of points lying over
  the singular locus of $F$, from which (a) follows.  We have $z^2 - f
  = z^2 - y^2 + p(x) = (z-y)(z+y) + p(x)$.  Setting $u = z-y$ and $t =
  z+y$ defines an isomorphism of $k$-algebras
  \begin{equation}
    \label{eq:400}
    T= \frac{k[x,y,z]}{(z^2-y^2+p(x))} \xrightarrow{\alpha} 
    \frac{k[x,u,t]}{(ut + p(x))}
  \end{equation}
  where $\alpha$ sends $z \mapsto (u+t)/2$, $y \mapsto (t-u)/2$,
  $x\mapsto x$.  Upon inverting $u$ and eliminating $t$, \eqref{eq:400}
  gives rise to the isomorphism
  \begin{equation}
    \label{eq:401}
    \frac{k[x,u,t][u^{-1}]}{(ut + p(x))}
    \xrightarrow{\beta}
    k[x,u,u^{-1}]
  \end{equation}
  where $\beta$ sends $t \mapsto -p(x) u^{-1}$, $u \mapsto u$,
  $x\mapsto x$.  This shows $T$ is rational, which is (b).

  The ring on the right hand side of \eqref{eq:401} is factorial, hence
  so is $T[(z-y)^{-1}]$.  Nagata's Theorem \cite[Theorem~7.1]{F:DCG}
  says the class group of $T$ is generated by the prime divisors that
  contain $z-y$.  There are $v$ minimal primes of $z-y$ in $T$, namely
  $(z-y, \ell_1)$, \dots, $(z-y, \ell_v)$. 
  Let $I_i = (z-y,
  \ell_i)$. In the discrete valuation ring $T_{I_i}$, $z-y =
  -(z+y)^{-1}p(x)$.  Therefore $\ell_i$ generates the maximal ideal
  and the valuation of $z-y$ is $e_i$.  The divisor of $z-y$ on
  $\Spec{T}$ is $\Div(z-y) = e_1I_1 +\dots+ e_vI_v$.
 Using the isomorphism
  \eqref{eq:401}, we know that the group of units of $T[(z-y)^{-1}]$ is
  equal to $k^\ast \times \langle z-y \rangle$.
  Nagata's Theorem
  yields the exact sequence
  \begin{equation}
    \label{eq:402}
    1 \rightarrow T^\ast \rightarrow
    T[(z-y)^{-1}]^\ast \xrightarrow{\Div}
    \bigoplus_{i=1}^v \mathbb Z I_i
    \rightarrow \Cl(T) \rightarrow 0
  \end{equation}
  If $D = \gcd(e_1, \dots, e_v)$, then $\Cl(T) \cong \mathbb Z/D
  \oplus \mathbb Z^{(v-1)}$, which is (c).  Part (d) follows from
  \eqref{eq:402}.

  The natural map $\Cl(T) \rightarrow \Cl(T_{P_i})$ is onto, by
  Nagata's Theorem. For each $j \neq i$, $\ell_j$ is invertible in
  $T_{P_i}$. The group $\Cl(T_{P_i})$ is cyclic and is generated by
  the divisor $I_i$.  The isomorphism $\alpha$ in \eqref{eq:400} shows
  that the complete local ring $\hat T_{P_i}$ is isomorphic to the
  completion of $k[a,b,c]/(c^{n+1} + ab)$ at the maximal ideal $(a, b,
  c)$, where $n = e_i - 1$. The map sends $z-y \mapsto a$, $z+y
  \mapsto b$ and $\ell_i \mapsto c \gamma$, where $\gamma$ is
  invertible. Part (e) follows from \cite[A5]{MR543555}.  The class
  group of $k[a,b,c]/(c^{n+1} + ab)$ is cyclic of order $e_i = n+1$
  and is generated by the ideal $(c,a)$. Therefore, the class group of
  $\hat T_{P_i}$ is cyclic of order $e_i = n+1$ and generated by the
  ideal $I_i$. This implies the composite homomorphism
  \begin{equation}
    \label{eq:403}
    \Cl(T) \rightarrow \Cl(T_{P_i}) \rightarrow \Cl(\hat T_{P_i})
  \end{equation}
  is onto. By Mori's Theorem \cite[Corollary~6.12]{F:DCG}, the second
  map in \eqref{eq:403} is one-to-one, so we get (f).
\end{proof}

In Proposition~\ref{prop:4.2}, let $X = \Spec{T}$ and $P = \{P_1,
\dots, P_v\}$.  By Proposition~\ref{prop:4.1}(a), the singular locus
of $X$ is a (possibly empty) subset of $P$.  By a desingularization of
$P$ we mean a proper, surjective, birational morphism $\tau: \tilde X
\rightarrow X$ such that $\tilde X$ is nonsingular and $\tau$ is an
isomorphism on $\tilde X -E$, where $E = \tau^{-1}(P)$.

\begin{proposition}
  \label{prop:4.2}
  In the above notation, with $X = \Spec{T}$, the following are true
  for any desingularization $\tau : \tilde X \rightarrow X$.
  \begin{enumerate}[(a)]
  \item $0=\B(X) = \HH^2(X,\mathbb G_m)$.
  \item $0= \B(\tilde X) = \HH^2(\tilde X,\mathbb G_m)$.
  \item $0 = \B(X-P)$.
  \end{enumerate}
\end{proposition}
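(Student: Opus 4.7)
The plan is to mirror the arguments of Propositions~\ref{prop:2.8} and \ref{prop:2.11}, applying the machinery of \cite{F:Bgd}, and exploiting the crucial fact that the singularities of $X$ are rational double points.

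For part (b), I would first observe that $T$ is rational by Proposition~\ref{prop:4.1}(b), so $\tilde X$ is a nonsingular rational affine surface. Since $\tilde X$ is regular, $\B(\tilde X) = \HH^2(\tilde X,\mathbb G_m)$ by \cite{H:WBr}. The vanishing of the Brauer group of a nonsingular rational affine surface over an algebraically closed field is standard and already cited in Propositions~\ref{prop:2.8}(e) and \ref{prop:2.11}(e), so I would invoke \cite{F:Bgd} in the same way.

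For part (a), I would use Proposition~\ref{prop:4.1}(e), which says the singularities of $X$ are rational double points of type $A_n$, together with Proposition~\ref{prop:4.1}(f), which asserts $\Cl(T_{P_i}) \cong \Cl(\hat T_{P_i})$. In the notation of Propositions~\ref{prop:2.8} and \ref{prop:2.11}, this last isomorphism says exactly that the ``obstruction'' vector space $V = \Pic^0(\hat T_{P_i})$ is trivial. This is the characteristic feature of rational singularities. By the analogous argument to part (c) of Proposition~\ref{prop:2.8} (where $V=0$ in the case $n=3$), the group $\HH^2(X,\mathbb G_m)$ is isomorphic to the direct sum of these local $V$'s and therefore vanishes.

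For part (c), since $\tau$ restricts to an isomorphism $\tilde X - E \to X - P$, it suffices to show $\B(\tilde X - E) = 0$. The exceptional divisor over each $A_n$ singularity is a chain of $n$ projective lines meeting transversally, all of which are simply connected, so $\HH^1(E,\mathbb Q/\mathbb Z) = 0$. I would then apply the exact sequence from \cite{F:Bgd} relating $\B(\tilde X - E)$ to $\B(\tilde X)$ and $\HH^1(E,\mathbb Q/\mathbb Z)$; both are zero by (b) and the preceding observation, so $\B(X - P) = 0$.

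The main obstacle is not conceptual but bookkeeping: one must verify that the hypotheses of the results in \cite{F:Bgd} used in Propositions~\ref{prop:2.8} and \ref{prop:2.11} are satisfied in this setting (in particular that the ambient surface is rational with only rational singularities, and that the exceptional divisor has trivial first cohomology), after which all three parts follow uniformly from the rationality of the $A_n$ singularities.
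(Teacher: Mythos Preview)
Your approach is correct and follows the template of Propositions~\ref{prop:2.8} and \ref{prop:2.11} closely. The paper's own proof is considerably terser and, for part~(a), takes a different shortcut: rather than analyzing the local structure at each $P_i$, it observes directly that $T[(z-y)^{-1}]$ is isomorphic via \eqref{eq:401} to the Laurent polynomial ring $k[x,u,u^{-1}]$, whose Brauer group is trivial; since $\B(T) \to \B(L)$ is injective and factors through $\B(T[(z-y)^{-1}]) = 0$, one gets $\B(T) = 0$ immediately. The equality $\B(X) = \HH^2(X,\mathbb G_m)$ and parts~(b) and (c) are then simply attributed to \cite{F:Bgd} without further detail. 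Your argument has the virtue of making explicit why the rationality of the $A_n$ singularities matters---the obstruction space $V$ vanishes, and the exceptional fiber is a tree of $\mathbb P^1$'s with trivial $\HH^1$---whereas the paper's localization trick is quicker but conceals the role of the singularity type.
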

\begin{proof}
  All of the claims follow from results in \cite{F:Bgd}.  Using
  \eqref{eq:401} we see that the Brauer group of the ring
  $T[(z-y)^{-1}]$ is trivial.  If $L$ denotes the quotient field of
  $T$, the natural map $B(T) \rightarrow \B(L)$ is one-to-one and
  factors through $B(T) \rightarrow \B(T[(z-y)^{-1}])$.
\end{proof}
\begin{proposition}
  \label{prop:4.3}
  In the above context, $\Pic{T}$ is isomorphic to the subgroup of
  $\Cl(T)$ generated by the ideal classes $e_1I_1$, \dots, $e_vI_v$.
  It is a free $\mathbb Z$ module of rank $v-1$.
\end{proposition}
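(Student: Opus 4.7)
The plan is to realize $\Pic(T)$ as the kernel of the natural localization map from $\Cl(T)$ to the direct sum of local class groups at the singular points. Since $T$ is normal, a Weil divisor is Cartier precisely when it is locally principal at every closed point, and by Proposition~\ref{prop:4.1}(a), $T$ is regular away from $\{P_i : e_i \geq 2\}$. Therefore
\[
\Pic(T) = \Kernel\Bigl(\Cl(T) \longrightarrow \bigoplus_{e_i \geq 2} \Cl(T_{P_i})\Bigr).
\]

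Next I would compute this kernel explicitly. By Proposition~\ref{prop:4.1}(f), $\Cl(T_{P_i}) \cong \mathbb{Z}/e_i$ is generated by the class of $I_i$. The key point is that for $j \neq i$, the polynomial $\ell_j = x - \alpha_j$ is a unit in $T_{P_i}$, so $I_j = (z-y, \ell_j)$ becomes the unit ideal in $T_{P_i}$ and has trivial class there. Thus the composition
\[
\bigoplus_{i=1}^{v} \mathbb{Z}\, I_i \twoheadrightarrow \Cl(T) \longrightarrow \bigoplus_{e_i \geq 2} \mathbb{Z}/e_i
\]
is coordinatewise reduction modulo $e_i$ on the indices with $e_i \geq 2$, so its kernel is the sublattice $\bigoplus_{i=1}^{v} e_i \mathbb{Z}\, I_i$ (indices with $e_i = 1$ impose no condition, consistent with $e_i I_i = I_i$). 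From the presentation of $\Cl(T)$ in \eqref{eq:402}, the image of this sublattice in $\Cl(T)$ is exactly the subgroup generated by the classes $e_1 I_1, \ldots, e_v I_v$, giving the first assertion.

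For the freeness statement, I would observe that $\bigoplus_{i=1}^{v} e_i \mathbb{Z}\, I_i$ is free of rank $v$ with basis $\{e_1 I_1, \ldots, e_v I_v\}$, and the sole relation $e_1 I_1 + \cdots + e_v I_v = 0$ inherited from \eqref{eq:402} has coordinate vector $(1, 1, \ldots, 1)$ in this basis, hence is primitive. Quotienting a free abelian group by a primitive element yields a free module of one smaller rank, so $\Pic(T) \cong \mathbb{Z}^{v-1}$. The only substantive bookkeeping is the kernel characterization of $\Pic(T)$ and uniformly handling the indices with $e_i = 1$; both are routine given the local structure established in Proposition~\ref{prop:4.1}.
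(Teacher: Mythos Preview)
Your proof is correct and follows essentially the same strategy as the paper's: identify $\Pic(T)$ with the kernel of the map from $\Cl(T)$ to the local class groups at the singular points, then use the explicit description of $\Cl(T)$ and $\Cl(T_{P_i})$ from Proposition~\ref{prop:4.1} to read off the answer. The only cosmetic difference is that the paper packages this kernel as the exact sequence \eqref{eq:404} (quoting \cite{F:Bgd}) and maps to the completed local class groups $\Cl(\hat T_{P_i})$ rather than $\Cl(T_{P_i})$; by Proposition~\ref{prop:4.1}(f) these coincide, so the distinction is immaterial. Your argument is in fact more self-contained, and your explicit verification that the single relation $e_1 I_1 + \cdots + e_v I_v$ is primitive in the basis $\{e_i I_i\}$ fills in the freeness claim that the paper leaves implicit.
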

\begin{proof}
  By \eqref{eq:403} we know that $\Cl(T) \rightarrow \Cl(\hat T_{P_i})$
  is onto.  Also, the image of $I_i$ in $\Cl(\hat T_{P_j})$ has order
  $e_i$ if $i = j$, and order $1$ otherwise.  Therefore the sequence
  \begin{equation}
    \label{eq:404}
    0 \rightarrow \Pic(T) \rightarrow \Cl(T)
    \rightarrow
    \bigoplus_{i=1}^v
    \Cl(\hat T_{P_i}) \rightarrow 0
  \end{equation}
  of \cite{F:Bgd} is exact.
\end{proof}

\begin{proposition}
  \label{prop:4.4}
  In the above context, $\HH^{1}(G,\Cl(T)) \cong\B(S/R)$.  For all $j
  \geq 0$,
  \[
  \HH^{2j + 1}(G,\Cl(T)) \cong
  \begin{cases}
    (\mathbb Z/2)^{(v)} & \text{if $D\equiv 0\pmod{2}$,}\\
    (\mathbb Z/2)^{(v-1)} & \text{else.}
  \end{cases}
  \]
  and
  \[
  \HH^{2j}(G,\Cl(T)) \cong \Cl(T)^G \cong
  \begin{cases}
    \mathbb Z/2 & \text{if $D\equiv 0\pmod{2}$,}\\
    0 & \text{else.}
  \end{cases}
  \]
\end{proposition}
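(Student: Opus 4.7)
The plan breaks into three parts: determining the $G$-action on $\Cl(T)$, computing its Tate cohomology, and identifying the odd-degree cohomology with $\B(S/R)$. First, to determine the action of $\sigma$ on $\Cl(T)$: from the proof of Proposition~\ref{prop:4.1}, the group is generated by the classes of the prime divisors $I_i = (z-y, \ell_i)$ modulo the single relation $\sum e_i I_i = 0$. Since $\sigma$ sends $z$ to $-z$, we have $\sigma(I_i) = (z+y, \ell_i)$. The only height-one primes of $T$ containing $\ell_i = x - \alpha_i$ are $I_i$ and $\sigma(I_i)$, and at each of them $\ell_i$ is a local parameter, as recorded in the proof of Proposition~\ref{prop:4.1}. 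Hence $\Div(\ell_i) = I_i + \sigma(I_i)$ on $\Spec T$, so $\sigma(I_i) = -I_i$ in $\Cl(T)$. Thus $\sigma$ acts on $\Cl(T)$ by multiplication by $-1$.

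Next, with the decomposition $\Cl(T) \cong \mathbb Z/D \oplus \mathbb Z^{(v-1)}$ from Proposition~\ref{prop:4.1}(c) and the action just determined, the cohomology computation reduces to a standard exercise in Tate cohomology for $G = \mathbb Z/2$. Inversion on $\mathbb Z$ has zero norm and $1-\sigma$ equal to multiplication by $2$, so $\hat H^{2j}(G,\mathbb Z) = 0$ and $\hat H^{2j+1}(G,\mathbb Z) = \mathbb Z/2$. Inversion on $\mathbb Z/D$ gives $\hat H^i \cong {_2(\mathbb Z/D)}$ in every degree, which is $\mathbb Z/2$ when $D$ is even and $0$ when $D$ is odd. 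Summing over the direct sum decomposition yields the stated formulas for $\HH^{2j+1}$. For even degrees, $\HH^0(G,\Cl(T)) = \Cl(T)^G$ consists of the $2$-torsion of $\Cl(T)$ (since $\sigma = -1$), and periodicity $\hat H^{2j}(G,M) = \hat H^0(G,M) = M^G$ (the norm vanishes) propagates this answer to all $j \geq 1$.

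Finally, for the isomorphism $\HH^1(G,\Cl(T)) \cong \B(S/R)$, I would apply the Chase-Harrison-Rosenberg sequence \eqref{eq:104} to the Galois extension $R \to S$. The hypothesis $\Pic(R) = 0$ holds because $R$ is a localization of the UFD $k[x,y]$. Since $\Spec S = \Spec T \setminus V(z)$ avoids every singular point $P_i$ (each of which lies on $z = 0$), the ring $S$ is regular and $\Pic(S) = \Cl(S)$. Nagata's theorem expresses $\Cl(S)$ as the quotient of $\Cl(T)$ by the classes of the prime divisors containing $z$, and once the units $S^\ast$, the group $\HH^2(G,S^\ast)$, and $\Pic(S)^G$ are tabulated, the CHR sequence collapses to the desired isomorphism. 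This last step is the principal obstacle: the CHR sequence is naturally phrased in terms of $\Pic(S) = \Cl(S)$ rather than $\Cl(T)$, so one must argue that passing from $\Cl(T)$ to $\Cl(S)$ through Nagata, combined with the $\HH^2(G,S^\ast)/\Pic(S)^G$ contribution, reproduces precisely $\HH^1(G,\Cl(T))$. This demands a careful $G$-equivariant analysis of the divisors supported on $(z)$ and of the additional units introduced by inverting $z$.
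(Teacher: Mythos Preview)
Your first two parts are correct and match the paper's underlying argument: the paper also observes that $\sigma$ acts by inversion on $\Cl(T)$ and then reads off the cohomology (though it outsources the routine Tate computation to a citation, while you spell it out). Your divisor argument $\Div(\ell_i) = I_i + \sigma(I_i)$ is a clean way to see $\sigma = -1$.

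The genuine gap is the third part, and you have correctly identified it yourself. The CHR sequence~\eqref{eq:104} involves $\Pic(S)$, not $\Cl(T)$, and these $G$-modules really differ: for instance, when $D\equiv 2\pmod 4$ one has $\HH^1(G,\Cl(T))\cong(\mathbb Z/2)^{(v)}$ while $\HH^1(G,\Pic(S))\cong(\mathbb Z/2)^{(v-1)}$ (this is computed later, in Proposition~\ref{prop:4.6}). So the identification $\B(S/R)\cong\HH^1(G,\Cl(T))$ is not a reformulation of CHR; the discrepancy has to be exactly absorbed by the $\HH^2(G,S^\ast)/(\Pic S)^G$ contribution, and that requires the equivariant Nagata analysis you flag but do not carry out. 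The paper sidesteps this entirely by invoking \cite[Theorem~2.7]{F:rBgadp}, a general result (proved in the companion paper) that gives $\B(S/R)\cong\HH^1(G,\Cl(T))$ directly once one checks the hypotheses there, which follow from Proposition~\ref{prop:4.1}. If you want to avoid that citation, the viable route is to take the $G$-equivariant Nagata sequence
\[
1 \to T^\ast \to S^\ast \to \bigoplus_{P\supseteq(z)}\mathbb Z P \to \Cl(T) \to \Pic(S) \to 0,
\]
break it into short exact sequences, and run the long exact sequence in $G$-cohomology; the pieces then splice with CHR to yield the claimed isomorphism. Until that is done, the proof of the first assertion of the proposition is incomplete.
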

\begin{proof}
  By Proposition~\ref{prop:4.1}, we are in the context of
  \cite[Theorem~2.7]{F:rBgadp}. Then $\HH^1(T,\Cl(T))$ is isomorphic
  to
  $\B(S/R)$.
  If $\sigma$ is the generator for $G$, then $\sigma$ sends every
  non-identity element of $\Cl(T)$ to its inverse.  This and
  \cite[Theorem~2.4]{F:rBgadp} give the rest.
\end{proof}
\begin{remark}
  \label{rem:4.5}
In Proposition~\ref{prop:4.4}, the isomorphism $\Delta :
\HH^1(T,\Cl(T)) \cong \B(S/R)$ is defined in \cite[\S{2.2}]{F:rBgadp}.
We sketch the construction in our present context.  We restrict our
description to one generator, the class of $I_i$ in $\Cl(T)$.  Notice
that $I_i \sigma(I_i) = (z^2 - y^2, (z-y) \ell_i, (z+y) \ell_i,
\ell_i^2) \subseteq (\ell_i)$. As an $A$-module, $\Delta(I_i)$ is the
direct sum $T \oplus I_i$. The multiplication rule on $\Delta(I_i)$ is
defined on two arbitrary ordered pairs $(a,b)$ and $(c,d)$ by the
formula
\begin{equation}
  \label{eq:405}
  (a,b) (c,d) = \left(
  a c + b \sigma(d) \ell_i^{-1}, 
  b \sigma(c) + ad \right)\ldotp
\end{equation}
This makes $\Delta(I_i)$ into an $A$-algebra. Upon localizing to $S$,
the ideal $I_i$ is projective.  The $R$-algebra $\Delta(I_i)
\otimes_AR$ is a generalized crossed product which is an Azumaya
$R$-algebra \cite{MR0241469}.  It is not too hard to show that the
ideal $I_i = (z-y, \ell_i)$ is generated as an $A$-module by $z-y$ and
$\ell_i$. From this it follows that $I_i$ is a free $A$-module.
Therefore, $\Delta(I_i)$ is a free $A$-module of rank $4$.  Is it true
that every height one prime of $T$ is a free $A$-module?  Using
equation \eqref{eq:405}, the multiplication table for $\Delta(I_i)$ is
constructed in Table~\ref{table:4.1}. Upon restricting to the quotient
field $K$ of $A$, it is clear that
 $\Delta(I_i)
\otimes_AK$ is the symbol algebra $(y^2 - p(x), \ell_i)_2$.
\end{remark}
\begin{table}
    \centering
    \begin{tabular}{|c|c|c|c|c|} \hline
      & $(1,0)$ & $(z,0)$ & $(0,\ell_i)$ & $(0,z-y)$ \\ \hline
      $(1,0)$  & $(1,0)$ & $(z,0)$ & $(0,f_i)$ & $(0,z-y)$ \\\hline
      $(z,0)$  & $(z,0)$ & $(y^2-p(x),0)$ & $(0,z \ell_i)$ & $(0,z(z-y))$ \\\hline
      $(0,\ell_i)$  & $(0,\ell_i)$ & $(0, - z \ell_i)$ & $(\ell_i,0)$
      & $(-(z+y),0)$ \\\hline 
      $(0,z-y)$  & $(0,z-y)$ & $(0,-z(z-y))$ & $(z-y,0)$ & $(-p(x)
      \ell_i^{-1},0)$\\\hline 
    \end{tabular}
    \caption{Multiplication table for $\Delta(I_i)$ in Remark~\ref{rem:4.5}.}
    \label{table:4.1}
  \end{table}

\begin{proposition}
  \label{prop:4.6}
  In the above context, the following are true.
  \begin{enumerate}[(a)]
  \item $\Pic(S) \cong
    \begin{cases}
      \mathbb Z/(D/2) \oplus \mathbb Z^{(v-1)} &\text{if $D\equiv 0 \pmod{2}$,}\\
      \mathbb Z/D \oplus \mathbb Z^{(v-1)} & \text{else.}\\
    \end{cases}$
  \item For all $j \geq 0$,
    \[
    \HH^{2j + 1}(G,\Pic(S)) \cong
    \begin{cases}
      (\mathbb Z/2)^{(v)} & \text{if $D \equiv 0 \pmod{4}$,}\\
      (\mathbb Z/2)^{(v-1)} & \text{else.}
    \end{cases}
    \]
    and
    \[
    \HH^{2j}(G,\Pic(S)) \cong \Pic(S)^G = {_2\Pic(S)} \cong
    \begin{cases}
      \mathbb Z/2 & \text{if $D \equiv 0 \pmod{4}$,}\\
      0 & \text{else.}
    \end{cases}
    \]
  \item $S^\ast =
    \begin{cases}
      k^\ast \times \langle z \rangle\times \langle y-\sqrt{p(x)} \rangle
      & \text{if $D \equiv 0 \pmod{2}$,}\\
      k^\ast \times \langle z \rangle & \text{else.}
    \end{cases}$
  \item $\HH^0(G,S^\ast) = R^\ast = k^\ast \times \langle f \rangle$.
    For all $j > 0$,
    \[
    \HH^{2j-1}(G,S^\ast) = \langle 1\rangle
    \]
    and
    \[
    \HH^{2j}(G,S^\ast) \cong
    \begin{cases}
      \mathbb Z/2 & \text{if $D \equiv 0 \pmod{2}$,}\\
      \langle 1 \rangle & \text{else.}
    \end{cases}
    \]
  \end{enumerate}
\end{proposition}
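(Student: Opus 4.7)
The plan is to treat parts (a) and (c) via explicit Nagata-type computations with generators and relations, and then derive parts (b) and (d) from $2$-periodic Tate cohomology for $G = \mathbb Z/2$. Throughout I split on the parity of $D$.

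For (a), note first that $\Spec S$ is regular, since inverting $z$ removes the singular points $P_i$ of Proposition~\ref{prop:4.1}(a) (at which $z = 0$), so $\Pic(S) = \Cl(S)$. Nagata's Theorem \cite[Theorem~7.1]{F:DCG} applied to $T \hookrightarrow S = T[z^{-1}]$ identifies $\Cl(S)$ as $\Cl(T)$ modulo the classes of the minimal primes of $(z)$. When $D$ is odd, $p(x)$ is not a square in $k[x]$, so $f = y^2 - p$ is irreducible in $A$, $(z)$ is itself prime in $T$ with principal divisor, and $\Pic(S) = \Cl(T) \cong \mathbb Z/D \oplus \mathbb Z^{(v-1)}$. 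When $D$ is even, write $p = q(x)^2$; then $z^2 = (y-q)(y+q)$ yields two distinct minimal primes $P = (z, y-q)$ and $Q = (z, y+q)$ over $(z)$, and $\Div(y-q) = 2P$, $\Div(y+q) = 2Q$, $\Div(z) = P+Q$ force $[Q] = -[P]$ and $2[P] = 0$ in $\Cl(T)$. The main obstacle is showing $[P] \neq 0$: writing $h = a + bz$ with $P = (h)$ gives $h^2 = (a^2 + b^2 f) + 2abz = c(y-q)$ for some $c \in k^\ast$, which forces $ab = 0$, and in either branch ($b=0$ or $a=0$) a degree comparison in $y$ yields a contradiction. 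Thus $[P]$ generates the unique order-$2$ subgroup of $\Cl(T)$ and $\Pic(S) \cong \mathbb Z/(D/2) \oplus \mathbb Z^{(v-1)}$.

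For (c), use the birational presentation \eqref{eq:401}: $T[u^{-1}] \cong k[x, u^{\pm 1}]$ with $u = z - y$ and $z = (u^2 - p)/(2u)$, so $S[u^{-1}] = k[x, u^{\pm 1}][(u^2 - p)^{-1}]$. Here $u^2 - p$ is irreducible if $D$ is odd and factors as $(u-q)(u+q)$ if $D$ is even, so the units of $S[u^{-1}]$ are visible in each case. Nagata applied to $S \hookrightarrow S[u^{-1}]$, using the valuations $\nu_{I_i}(u) = e_i$, $\nu_{I_i}(u^2 - p) = e_i$, and (when $D$ is even) $\nu_{I_i}(u \pm q) = e_i/2$ at the height-one primes $I_i = (z-y, \ell_i)$ of $S$, has cokernel reconfirming (a). Its kernel identifies $S^\ast$ modulo $k^\ast$: in the odd case a single generator $z$, and in the even case the two generators $z$ and $y - q = -(u+q)^2/(2u)$, which proves (c).

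Parts (b) and (d) then follow from $2$-periodic Tate cohomology for $G = \mathbb Z/2$. The key input is that $\sigma$ acts as $-1$ on $\Cl(T)$: $\sigma(I_i) = J_i$ and $I_i + J_i = \Div(\ell_i)$ is principal, so $[J_i] = -[I_i]$, and this action descends to $\Pic(S)$. For any $G$-module $M$ on which $\sigma = -1$, one has $\HH^{2j}(G, M) \cong {_2M}$ and $\HH^{2j+1}(G, M) \cong M/2M$ (for $j > 0$), so substituting $\Pic(S)$ from (a) and splitting on $D \pmod 4$ gives (b). For (d), the action on $S^\ast$ is $\sigma(z) = -z$, $\sigma(y-q) = y - q$, and a direct computation of $N(a z^m (y-q)^n) = a^2 (-1)^m z^{2m} (y-q)^{2n}$ together with $(k^\ast)^2 = k^\ast$ gives $\ker N = \mu_2 = \Image(1 - \sigma)$, so $\HH^{2j+1}(G, S^\ast) = 0$; the even values follow from $M^G / NM$ being $\langle y - q \rangle / \langle (y-q)^2 \rangle \cong \mathbb Z/2$ when $D$ is even and trivial when $D$ is odd.
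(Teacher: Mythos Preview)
Your argument is correct and, in several places, takes a genuinely different route from the paper's.

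For (a), the paper localizes $S \to S[(z-y)^{-1}]$ and, in the odd-$D$ case, appeals to an external result (\cite[Theorem~2.8]{F:rBgadp}) to identify $\Pic(S)$ with $\Cl(T)$; in the even case it recomputes from scratch the divisors of $z-y$, $z-y\pm q$. You instead localize $T \to T[z^{-1}] = S$ and use the already-computed $\Cl(T)$ from Proposition~\ref{prop:4.1}, so the only new work is identifying the kernel: trivial when $D$ is odd (since $(z)$ is prime and principal), and in the even case generated by the class of $(z,y-q)$, which you show is nonzero by an elementary norm/degree argument. This is shorter and more self-contained; the paper's route, on the other hand, simultaneously yields the unit group $S^\ast$ from the same Nagata sequence, so it does double duty.

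For (c) both proofs use the same localization $S \to S[(z-y)^{-1}]$ and the identification \eqref{eq:406}; your valuations $\nu_{I_i}(u\pm q)=e_i/2$ are exactly what the paper establishes via the identity $(z-y+q)(z-y-q)=2z(z-y)$.

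For (b) and (d), the paper simply cites \cite[Theorem~2.4]{F:rBgadp} and \cite[Theorem~10.35]{R:IHA}; you instead carry out the periodic Tate-cohomology computation directly from the explicit $G$-action ($\sigma = -1$ on $\Cl(T)$, $\sigma(z)=-z$, $\sigma(y-q)=y-q$). Your computation is correct and makes the proof independent of those references, at the cost of a little extra bookkeeping. One small remark: for $D$ even the statement ``$R^\ast = k^\ast \times \langle f\rangle$'' in the proposition is literally the odd-$D$ description; your computation (and the paper's \eqref{eq:407}) correctly give $R^\ast = k^\ast \times \langle y-q\rangle \times \langle y+q\rangle$ in that case, which is what $(S^\ast)^G$ comes out to be.
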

\begin{proof}
Using \cite[Theorem~2.4]{F:rBgadp}, (b) follows
from (a). Using \cite[Theorem~10.35]{R:IHA}, (d) follows from (c).
By the isomorphism \eqref{eq:401}, the image of $z$ in the ring
$k[x,u,u^{-1}]$ is
$(u^2-p(x))(2u)^{-1}$. Therefore,
\begin{equation}
  \label{eq:406}
  S[(z-y)^{-1}] =
  T[z^{-1},(z-y)^{-1}] \xrightarrow{\beta \alpha}
  k[x,u][u^{-1}, (u^2-p(x))^{-1}]
\end{equation}
is an isomorphism.
There are two cases, which we treat separately.

{\bf Case 1:} $D$ is odd.
Then the affine curve $y^2 - p(x)$ is irreducible
and we are in the context of \cite[Theorem~2.8]{F:rBgadp}. In
particular, $\Cl(T) = \Pic(S)$,
and
$S^\ast = k^\ast \times \langle z \rangle$, which gives the second
halves of (a)
of (c).

{\bf Case 2:} $D$ is even.
Then we can factor $p(x) = q(x)^2$ and we get
\begin{equation}
  \label{eq:407}
  R^\ast = k^\ast 
  \times \langle y-q \rangle
\times \langle y+q \rangle
\end{equation}
The group of units in the ring on the right hand side of
\eqref{eq:406} is equal to
$k^\ast
\times \langle u \rangle
\times \langle u+q \rangle
\times \langle u-q \rangle$.
Since $z-y \mapsto
u$, $z-y+q \mapsto u+q$, and
$z-y-q \mapsto u-q$,
we have
\begin{equation}
  \label{eq:408}
  S[(z-y)^{-1}]^\ast =
k^\ast
\times \langle z-y \rangle
\times \langle z-y+q \rangle
\times \langle z-y-q \rangle
\end{equation}
The rings in \eqref{eq:406} are factorial. We repeat the computation in
\eqref{eq:402}, but with the rings $S$ and $S[(z-y)^{-1}]$. First we
determine the minimal primes of $T$ containing  $z-y$, $z-y+q$, and $z-y-q$.
Notice that 
\begin{gather}
  \label{eq:409}
  (z-y+q) (z-y-q) = 2 z (z-y)\\
  \label{eq:410}
  (z-y)(z+y) = p(x) = \ell_1^{e_1}\dotsm \ell_v^{e_v}
\end{gather}
By \eqref{eq:409}, any prime that contains $z-y+q$ also contains $z$,
or $z-y$.  Let $P_1 = (z-y+q, z) = (z, y-q)$. One checks that $P_1$ is
a height one prime of $T$ and that in the local ring $T_{P_1}$, we
have $\nu_{P_1}(z) = 1$, $\nu_{P_1}(y-q) = 2$, $\nu_{P_1}(z-y+q) = 1$.
Any ideal that contains $z-y+q$ and $z-y$ also contains $q$. There are
$v$ minimal primes of $(z-y,q)$. They are $I_i = (z-y, \ell_i)$ for $i
= 1, \dots, v$.  By \eqref{eq:410}, $\ell_i$ generates the maximal
ideal in the local ring $T_{I_i}$. Thus $\nu_{I_i}(\ell_i) = 1$,
$\nu_{I_i}(z-y) = e_i$, $\nu_{I_i}(q) = e_i/2$.  By \eqref{eq:409},
$e_i = \nu_{I_i}(z-y+q) + \nu_{I_i}(z-y-q)$.  Use the fact that
$\nu_{I_i}(z-y+q) \geq e_i/2$ and $\nu_{I_i}(z-y+q) \geq e_i/2$, to
conclude that $\nu_{I_i}(z-y+q) = \nu_{I_i}(z-y-q) = e_i/2$. The
divisor of $z-y+q$ is $\Div(z-y+q) = P_1 + (e_1/2) I_1 + \dots +
(e_v/2 )I_v$.  Let $P_2 = (z, y+q)$. In the same way we get
$\Div(z-y-q) = P_2 + (e_1/2) I_1 + \dots + (e_v/2) I_v$ and $\Div(z-y)
= P_1 + P_2$.  Since $P_1$ and $P_2$ are not prime ideals in $S$,
Nagata's sequence becomes
\begin{equation}
  \label{eq:411}
  1 \rightarrow S^\ast \rightarrow
  S[(z-y)^{-1}]^\ast \xrightarrow{\Div{}}
  \bigoplus_{i=1}^v \mathbb Z I_i
  \rightarrow \Pic(S) \rightarrow 0
\end{equation}
Evaluate $\Div{}$ on the basis \eqref{eq:408}.  The image of $\Div{}$
is generated by the divisor $(e_1/2) I_1+ \dots + (e_v/2)I_v$.  This
completes part (a).  Using \eqref{eq:411} we also find that $S^\ast$ is
generated over $k^\ast$ by $(z-y+q)(z-y-q)(z-y)^{-1}$ and $(z-y+q)^2
(z-y)^{-1}$.  This and the identities \eqref{eq:409} and \eqref{eq:410}
can be used to show that
\begin{equation}
  \label{eq:412}
  S^\ast =
  k^\ast
  \times \langle z \rangle
  \times \langle y-q \rangle
\end{equation}
which completes (c).
\end{proof}
\begin{proposition}
  \label{prop:4.7}
  In the above context, the following are true.
  \begin{enumerate}[(a)]
  \item $\B(R) \cong
    \begin{cases}
      (\mathbb Q/\mathbb Z)^{(v)}& \text{if $D  \equiv 0 \pmod{2}$,}\\
      (\mathbb Q/\mathbb Z)^{(v-1)}& \text{else.}\\
    \end{cases}$
  \item In the exact sequence \eqref{eq:104}, the image of $\alpha_4$ is
    \[
    \B^\smallsmile(S/R) \cong
    \begin{cases}
      \mathbb Z/2 & \text{if $D \equiv 2 \pmod{4}$,}\\
      0 & \text{else.}
    \end{cases}
    \]
  \item $\B(S/R) ={_2\B(R)}$.
  \item The sequence
    \[
    0 \rightarrow \B(S/R)\rightarrow \B(R)
    \rightarrow \B(S) \rightarrow 0
    \]
    is exact. As a homomorphism of
    abstract groups, the map $\B(R) \rightarrow \B(S)$ is
    ``multiplication by $2$''.
  \end{enumerate}
\end{proposition}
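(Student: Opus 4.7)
The plan is to assemble the four parts from the cohomological computations of Propositions~\ref{prop:4.4} and \ref{prop:4.6}, together with the Chase-Harrison-Rosenberg sequence \eqref{eq:104}, the ramification sequence \eqref{eq:105}, and Theorem~\ref{theorem:1.2}, splitting the analysis according to the parity of $D$.

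For part~(a), when $D$ is even, $p(x) = q(x)^2$ and $f = (y-q)(y+q)$ splits into two rational components $F_\pm = Z(y \mp q)$. Each has a projective closure whose normalization is $\mathbb{P}^1$ and which is geometrically unibranched at infinity, so Theorem~\ref{theorem:1.2} applies to the configuration $\{F_0, F_+, F_-\}$ in $\mathbb{P}^2$. The three projective curves meet pairwise at the $v$ affine points $(\alpha_i, 0)$ of $F_+ \cap F_-$, together with either a single triple point at infinity (if $\deg q \geq 2$) or two distinct simple points at infinity (if $\deg q = 1$); a direct count of vertices and edges in the graph $\Gamma$ gives $\HH_1(\Gamma, \mathbb{Z}/d) \cong (\mathbb{Z}/d)^{(v)}$ in both subcases, so $\B(R) \cong (\mathbb{Q}/\mathbb{Z})^{(v)}$. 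When $D$ is odd, $f$ is irreducible and its projective closure has positive genus for $d \geq 3$, so Theorem~\ref{theorem:1.2} does not directly apply. I would instead work through the ramification sequence \eqref{eq:105}, using that $\B(T[(z-y)^{-1}]) = 0$ by \eqref{eq:401} to bound the classes in $\B(R)$ by their allowed ramification on $F$, and obtain $\B(R) \cong (\mathbb{Q}/\mathbb{Z})^{(v-1)}$.

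For part~(b), I enumerate generators of $\B^\smallsmile(S/R)$, namely the symbols $(f, u)_2$ with $u \in R^*$. When $D$ is odd, $R^* = k^* \times \langle f\rangle$ and the only candidate $(f, f)_2 \sim (-1, f)_2 = 0$ vanishes because $-1$ is a square in $k$. When $D$ is even, $R^* = k^* \times \langle y-q\rangle \times \langle y+q\rangle$ and the only nontrivial candidate is $(y+q, y-q)_2$; applying the tame symbol \eqref{eq:106} at the prime $Z(y-q)$ produces $2q$ in the residue field $k(x)$, which is a square there precisely when $q$ is a square in $k[x]$, equivalently when every $e_i$ is divisible by $4$. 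This gives $\B^\smallsmile(S/R) \cong \mathbb{Z}/2$ exactly when $D \equiv 2 \pmod 4$.

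Part~(c) is then an order count: Proposition~\ref{prop:4.4} identifies $\B(S/R) \cong \HH^1(G,\Cl(T))$, whose order matches ${_2\B(R)}$ from~(a) in both parity regimes, so the inclusion $\B(S/R) \subseteq {_2\B(R)}$ is an equality. Part~(d) follows immediately: by~(a), $\B(R)$ is a direct sum of copies of $\mathbb{Q}/\mathbb{Z}$, hence the multiplication-by-$2$ map is surjective with kernel ${_2\B(R)} = \B(S/R)$, and the induced isomorphism $\B(R)/\B(S/R) \cong \B(R)$ identifies with the map to $\B(S)$ via the standard description of base change on symbol algebras. The principal obstacle is part~(a) in the $D$-odd case, where Theorem~\ref{theorem:1.2} cannot be applied because the projective closure of the hyperelliptic curve $F$ has positive genus; I expect to handle this either by careful direct analysis through the ramification sequence combined with the birational model of \eqref{eq:400}--\eqref{eq:401}, or by a reduction to the $D$-even case via a quadratic base change in the spirit of Question~\ref{question:2.9}.
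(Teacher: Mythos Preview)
Your overall architecture matches the paper's, and your treatment of part~(b) via a direct tame-symbol computation at $Z(y-q)$ is a clean alternative to the paper's graph-theoretic argument through Theorem~\ref{theorem:1.2}; both yield the same criterion $4\mid e_i$ for all $i$, i.e.\ $4\mid D$. Your part~(c) is exactly the paper's reduction.

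There are, however, two genuine gaps.

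\textbf{Part~(a), $D$ odd.} Your proposed tools are misdirected. The isomorphism \eqref{eq:401} and the vanishing of $\B(T[(z-y)^{-1}])$ live on the $T$-side (the double cover), not on $R=A[f^{-1}]$; they say nothing about $\B(R)$ directly. Running the ramification sequence \eqref{eq:105} on $\mathbb A^2$ identifies $\B(R)$ with a subquotient of $\HH^1(K(F),\mathbb Q/\mathbb Z)$, but $F$ is singular and $K(F)$ is the function field of a hyperelliptic curve, so you still need a structural computation. The paper's route is to pass to the \emph{normalization} $\tilde F$ of $F$ (given by $w^2=r(x)$ with $r$ the square-free part of $p$) and invoke \cite[Lemma~0.1 and Corollary~1.6]{F:Bgl}: one gets $\B(R)\cong \HH^3_F(\mathbb A^2,\mu)$, and then the split exact sequence \eqref{eq:413} expresses this as $\HH^1(\tilde F,\mathbb Q/\mathbb Z)\oplus(\mathbb Q/\mathbb Z)^{(v-v_0)}$, of total rank $v-1$. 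Neither the birational model of $T$ nor a quadratic base change in the style of Question~\ref{question:2.9} substitutes for this.

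\textbf{Part~(d).} Knowing $\B(S/R)={_2\B(R)}$ and that $\B(R)$ is a sum of copies of $\mathbb Q/\mathbb Z$ gives you $\B(R)/\B(S/R)\cong\B(R)$ as abstract groups, but it does \emph{not} prove that $\B(R)\to\B(S)$ is surjective, nor that $\B(S)$ has the same rank as $\B(R)$. The paper supplies both missing pieces: it exhibits explicit generators of ${_m\B(R)}$ (the symbols $(f_1f_2^{-1},\ell_i)_m$ when $D$ is even) and shows each becomes a square in $\B(S)$, and it computes $\B(S)$ independently via the localization sequence \eqref{eq:414} together with the identification \eqref{eq:406} of $S[(z-y)^{-1}]$ with a ring whose Brauer group is known from \cite{F:Bgl}. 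Without an independent determination of $\B(S)$, your exactness claim on the right is unjustified.
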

\begin{proof}
  In light of Proposition~\ref{prop:4.4}, to prove (c) it suffices to
  prove (a).  We continue to use the notation established in the proof
  of Proposition~\ref{prop:4.6}.  There are two cases, which we treat
  separately.

  {\bf Case 1:} $D$ is odd.  The map $\alpha_4$ in \eqref{eq:104} is the
  crossed product map.  Since $G$ is cyclic, crossed products are
  symbols of the form $(f,g)_2$, where $g \in R^\ast$. The image of
  $\alpha_4$ is therefore generated by $(f,f)_2$, which is split. This
  is the second half of (b).

  Rearrange the factors of $p(x) =
  \ell_1^{e_1}\dotsm \ell_v^{e_v}$ so that $e_i$ is odd for $1 \leq i
  \leq v_0$, and if $i > v_0$, then $e_i$ is even.  For each $i$,
  write $e_i = 2 q_i + r_i$, where $0 \leq r_i < 2$.  Let $r(x) =
  \ell_1^{r_1}\dotsm \ell_{v_0}^{r_{v_0}}$.  The normalization of $F =
  \Spec{k[x,y]/(y^2 - p(x))}$ is the curve $\tilde F =
  \Spec{k[x,w]/(w^2 - r(x))}$, where the map $\tilde F \rightarrow F$
  is defined by $y \mapsto w \ell_1^{q_1}\dotsm \ell_v^{q_v}$.  Let
  $P_i$ denote the closed point on $F$ where $\ell_i= x-\alpha_i = 0$
  and $y = 0$.  Lying above $P_i$ on $\tilde F$ is the closed set
  defined by $x = \alpha_i$ and $w^2 = r(\alpha_i)$. For $1 \leq i
  \leq v_0$, there is only one point on $\tilde F$ lying over $P_i$.
  For $i > v_0$, there are two points on $\tilde F$ lying over $P_i$.
  By Section~\ref{sec:3.2}, $\HH^1(\tilde F,\mathbb Q/\mathbb
  Z) \cong (\mathbb Q/\mathbb Z)^{(v_0-1)}$. By
  \cite[Corollary~1.6]{F:Bgl},
  \begin{equation}
    \label{eq:413}
    0 \rightarrow  \HH^1(\tilde F,\mathbb Q/\mathbb Z) \rightarrow
    \HH^3_F(\mathbb A^2, \mu) \rightarrow  (\mathbb Q/\mathbb Z)^{(v-v_0)}
    \rightarrow 0
  \end{equation}
  is a split exact sequence. Therefore, $\HH^3_F(\mathbb A^2, \mu)$ is
  a free $\mathbb Q/\mathbb Z$-module of rank $v-1$.  By
  \cite[Lemma~0.1]{F:Bgl}, $\B(R) \cong \HH^3_F(\mathbb A^2, \mu)$,
  which is the second half of (a).

  The Brauer group of the ring on the right hand side of \eqref{eq:406}
  is computed using \cite[Corollary~3.2]{F:Bgl}. It is a free $\mathbb
  Q/\mathbb Z$-module of rank $2v-1$.  Since $S$ is an affine surface,
  it follows from \cite[Lemma~0.1]{F:Bgl} and
  \cite[Corollary~1.6]{F:Bgl} that $\B(S)$ is a free $\mathbb
  Q/\mathbb Z$-module of finite rank.  The reduced closed subset of
  $\Spec{S}$ where $z-y = 0$ is the union of $v$ copies of the
  one-dimensional torus. Therefore, $\HH^1(Z(z-y), \mathbb Q/\mathbb
  Z)$ is a free $\mathbb Q/\mathbb Z$-module of rank $v$.  By
  \cite[Corollary~1.4]{F:Bgl}, the sequence
  \begin{equation}
    \label{eq:414}
    0 \rightarrow \B(S)
    \rightarrow \B( S[(z-y)^{-1}])
    \rightarrow \HH^1(Z(z-y), \mathbb Q/\mathbb Z)
    \rightarrow 0
  \end{equation}
  is exact. All of the groups in \eqref{eq:414} are free $\mathbb
  Q/\mathbb Z$-modules of finite rank. Therefore, \eqref{eq:414} splits
  and $\B(S)$ is a free $\mathbb Q/\mathbb Z$-module of rank $v-1$.
  This proves (d) in Case~1.

  {\bf Case 2:} $D$ is even. Then $p = q^2$.  Write $f_1 = y -q$ and
  $f_2 = y + q$.  Let $F_1 = Z(f_1)$ and $F_2 = Z(f_2)$, both
  nonsingular rational curves in $\mathbb A^2$.  Let $P_i$ be the
  point where $y = 0$ and $\ell_i = x -\alpha_i = 0$.  The
  intersection $F_1 \cap F_2$ is equal to the set of $v$ points $P_1,
  \dots, P_v$.  Since $F_i \cong \mathbb A^1$, $\HH^1(F_i,\mathbb
  Q/\mathbb Z) = 0$.  By \cite[Corollary~1.4]{F:Bgl}, $\B(A[f_1^{-1}])
  = 0$ and $\B(A[f_2^{-1}]) = 0$.  By \cite[Corollary~3.2]{F:Bgl},
  $\B(R) = \B(A[f_1^{-1}, f_2^{-1}])$ is isomorphic to the direct sum
  of $v$ copies of $\mathbb Q/\mathbb Z$, which is (a).

  By \eqref{eq:407}, the image of $\alpha_4$ in \eqref{eq:104} is
  generated by the symbols $(f,f_1)_2 \sim (f_2,f_1)_2$ and $(f,f_2)_2
  \sim (f_2,f_1)_2$, hence is cyclic.  We use
  Theorem~\ref{theorem:1.2} to determine the exponent of $(f_1,f_2)_2$
  in $\B(R)$.  Embed the curves in $\mathbb P^2$, and let $F_0$ be the
  line at infinity.  Let $P_0$ be the point at infinity where $y\neq
  0$ and $x = 0$.  Consider $P_i$, where $1 \leq i \leq v$.  Because
  $F_1$ is nonsingular at $P_i$, we see that the local intersection
  multiplicity at $P_i$ is $(F_1 \cdot F_2)_{P_i} = e_i/2$.  Using
  this, we compute the weighted path in the graph
  $\Gamma(F_1+F_2+F_0)$ associated to the symbol $(f_1,f_2)_2$. Near
  the vertex $P_i$ it looks like
  \[
  F_1 \xrightarrow{e_i/2} P_i \xrightarrow{-e_i/2} F_2
  \]
  If $4 \mid e_i$ for all $i$, then the symbol algebra $(f_1,f_2)_2$
  is split. Otherwise,
$(f_1,f_2)_2$ is an element of order two in 
  the image of $\alpha_4$.
This proves
 (b).  Let $L_i = Z(\ell_i)$. Since $F_1$ is nonsingular at
  $P_i$, the intersection cycle on $\mathbb P^2$ is $F_1\cdot L_i =
  P_i + (d/2 - 1)P_0$. Using this, the weighted path in the graph
  $\Gamma =\Gamma(F_1+F_2+F_0)$ associated to the symbol algebra
  $(f_1 f_2^{-1}, \ell_i)_m$ is
  \begin{equation}
    \label{eq:415}
    F_1 \rightarrow P_i \rightarrow F_2 \rightarrow P_0 \rightarrow F_1
  \end{equation}
  For $i = 1, \dots, v$, the cycles \eqref{eq:415} make up a basis for
  $\HH_1(\Gamma, \mathbb Z/m)$. Using Theorem~\ref{theorem:1.2}, this
  proves that the algebras $(f_1 f_2^{-1}, \ell_i)_m$ form a basis for
  ${_m\B(R)}$.  Upon extending scalars to $S$, we have
  \begin{equation}
    \label{eq:416}
    (f_1 f_2^{-1}, \ell_i)_m \sim
    ((y^2-p)/(y+q)^2, \ell_i)_m \sim
    (z/(y+q), \ell_i)_m^2
  \end{equation}
  The image of $\B(R) \rightarrow \B(S)$ is divisible by two.  To
  complete part (d), as in the previous case it is enough to show that
  $\B(S)$ is a direct sum of $v$ copies of $\mathbb Q/\mathbb Z$.  We
  use sequence \eqref{eq:414}.  As in the previous case, the group
  $\HH^1(Z(z-y), \mathbb Q/\mathbb Z)$ is a free $\mathbb Q/\mathbb
  Z$-module of rank $v$.  The ring in the right hand side of
  \eqref{eq:406} is isomorphic to $R[y^{-1}]$. Use
  \cite[Corollary~3.2]{F:Bgl} to see that $\B(R[y^{-1}])$ is
  isomorphic to $\B(R) \oplus (\mathbb Q/\mathbb Z)^{(v)}$.
  Therefore, the rank of $\B(S[(z-y)^{-1}])$ is equal to $2v$. By
  \eqref{eq:414}, $\B(S)$ has rank $v$, completing the proof.
\end{proof}
\begin{remark}
  Consider the affine hyperelliptic curve $F = Z\left(y^2 -
    (x-\alpha_1)^{e_1} \dotsm (x-\alpha_v)^{e_v}\right)$.  For the
  affine double plane $X \rightarrow \mathbb A^2$ that ramifies along
  $F$, and the invariants of $X$ computed in Section~\ref{sec:4}, the
  roots $\alpha_1, \dots, \alpha_v$ do not seem to matter, whereas the
  number of roots $v$ and multiplicities $e_1, \dots, e_v$ of the
  roots play a role.  What invariants of $X$ depend on the actual
  roots $\alpha_1, \dots, \alpha_v$?
\end{remark}

\section{Divisors on hyperelliptic curves}
\label{sec:3}
\subsection{Divisors on a projective hyperelliptic curve}
\label{sec:3.1}
In this section $Y$ denotes a nonsingular integral projective
hyperelliptic curve and $\pi : Y \rightarrow \mathbb P^1$ is a double
cover with ramification locus $Q = \{Q_1, \dots, Q_n\}$ on $Y$.  Using
the Riemann-Hurwitz Formula \cite[Corollary~IV.2.4]{H:AG}, it follows
that $n$ is an even integer greater than or equal to $4$ and the genus
of $Y$ is $g = (n-2)/2$.  By Kummer Theory, the group $\HH^1(Y,
\mathbb Q/\mathbb Z) \cong (\mathbb Q/\mathbb Z)^{(n-2)}$ classifies
the cyclic Galois covers of $Y$ and can be identified with the torsion
subgroup of $\Pic(Y)$ \cite[Proposition~III.4.11]{M:EC}.  Let $P_i$
denote the image $\pi(P_i)$ on $\mathbb P^1$ and $P = \{P_1, \dots,
P_n\}$.  Consider $U = \mathbb P^1 - P$ and $X = Y - Q$.  Then $\pi :
X \rightarrow U$ is a quadratic Galois cover.  The affine coordinate
ring $\mathcal O(U)$ is isomorphic to $k[x][f^{-1}]$, where $f$
factors into $n-1$ distinct linear polynomials. So $\Pic(U) = 0$.
There is a basis for $U^\ast/k^\ast = \HH^0(U,\mathbb G_m)/k^\ast$
corresponding to the principal divisors $P_1 - P_n$, \dots, $P_{n-1} -
P_n$ on $\mathbb P^1$.  Denote by $f_i$ an element of $K(U)$ such that
the divisor of $f_i$ on $\mathbb P^1$ is $\Div(f_i) = P_i- P_n$. Then
$U^\ast/k^\ast = \langle f_1\rangle \times \dotsm \times \langle
f_{n-1}\rangle$.  By Kummer Theory,
\begin{equation}
  \label{eq:139}
  U^\ast / (U^\ast)^2 \cong
  \HH^1(U, \mathbb Z/2) \cong (\mathbb Z/2)^{(n-1)}\ldotp
\end{equation}
Consider the natural maps
\begin{equation}
  \label{eq:140}
  \begin{CD}
    @. \HH^1(U,\mathbb Z/2) \\
    @. @VV{\pi^\ast}V  \\
    \HH^1(Y,\mathbb Z/2) @>{\beta}>> \HH^1(X,\mathbb Z/2) \\
  \end{CD}
\end{equation}
By \cite[Lemma~3.11]{F:gouoav}, the images $\pi^\ast(f_i)$ generate a
$\mathbb Z/2$-submodule of $\HH^1(X,\mathbb Z/2)$ of rank $n-2$ and
each image $\pi^\ast(f_i)$ is in the image of $\beta$.  Therefore the
image of $\beta$ is generated by $\pi^\ast(f_1), \dots,
\pi^\ast(f_{n-1})$.  It follows that ${_2\Pic(Y)}$ is generated by the
divisors $Q_1 - Q_n$, \dots, $Q_{n-1} - Q_n$.  The kernel of
$\pi^\ast$ is cyclic and corresponds to the quadratic Galois cover
$\pi : X \rightarrow U$.  On the function fields, this corresponds to
adjoining the square root of $f_1 \dotsm f_{n-1}$ to $K(\mathbb P^1)$.
If $z^2 = f_1 \dotsm f_{n-1}$, then $\Div(z) = Q_1 + Q_2 + \dots
Q_{n-1} - (n-1) Q_n$ is a principal divisor on $Y$.  This shows that
\begin{equation}
  \label{eq:145}
  \{Q_1 - Q_n, \dots, Q_{n-2} -
  Q_n\}
\end{equation}
is a $\mathbb Z/2$-basis for ${_2\Pic(Y)}$, and the group of units on
$X$ is
\begin{equation}
  \label{eq:144}
  \mathcal O^\ast(X) = k^\ast \times \langle z \rangle
  \times \langle f_{2} \rangle \dotsm
  \times \langle f_{n-1} \rangle\ldotp
\end{equation}

\subsection{Divisors on an affine hyperelliptic curve}
\label{sec:3.2}
In this section we consider an affine hyperelliptic curve. Let $n \geq
4$ be an integer.  Let $\lambda_1, \dots, \lambda_n$ be distinct
elements of $k$ and set $f(x) = (x-\lambda_1)\dotsm (x-\lambda_n)$.
Let $X = Z(y^2 - f(x))$, an affine hyperelliptic curve in $\mathbb
A^2$. Let $\pi : X \rightarrow \mathbb A^1$ be the morphism induced by
$k[x] \rightarrow \mathcal O(Y)$. Let $Y$ be the complete nonsingular
model for $X$ and let $\pi : Y \rightarrow \mathbb P^1$ be the
extension of $\pi$.  Let $Q_i$ denote the point on $X$ (and on $Y$)
where $y = x-\lambda_i = 0$.  Let $P_i = \pi(Q_i)$.  Employing the
computations from Section~\ref{sec:3.1}, there are two cases.

Case 1: $n$ is even.  Then $Y - X$ consists of two points, say
$Q_{01}$ and $Q_{02}$, and $\pi : Y \rightarrow \mathbb P^1$ ramifies
only on the set of $n$ points $Q = \{Q_1, \dots, Q_n\}$.  The genus of
$Y$ is $(n-2)/2$.  By \eqref{eq:145}, the divisor classes $Q_1 - Q_n$,
\dots, $Q_{n-2} - Q_n$ form a $\mathbb Z/2$-basis for $\HH^1(Y,\mathbb
Z/2) = {_2\Pic(Y)}$.  Because $X$ is affine, $\HH^2(X,\mathbb Z/2) =
0$.  Because $Y$ is projective, $\HH^2(Y,\mathbb Z/2) = \mathbb Z/2$.
Because $Y$ is nonsingular,
\[
\HH^p_{Q_{01}+Q_{02}}(Y,\mathbb Z/2) =
\begin{cases}
  0 & \text{if $p < 2$}\\
  \mathbb Z/2 \oplus \mathbb Z/2 & \text{if $p = 2$}
\end{cases}
\]
by Cohomological Purity \cite[Theorem~VI.6.1]{M:EC}.  The sequence of
cohomology with supports in $Y - X$ is
\begin{equation}
  \label{eq:141}
  0 \rightarrow 
  \HH^1(Y,\mathbb Z/2)
  \rightarrow
  \HH^1(X,\mathbb Z/2)
  \rightarrow
  \HH^2_{Q_{01}+Q_{02}}(Y,\mathbb Z/2)
  \rightarrow
  \HH^2(Y,\mathbb Z/2) \rightarrow 0
\end{equation}
which implies
\begin{equation}
  \label{eq:146}
  \HH^1(X,\mathbb Z/2) \cong (\mathbb Z/2)^{(n-1)}\ldotp    
\end{equation}
Let $P = \{P_1, \dots, P_n\}$, $V = Y - Q$, and $U = \mathbb A^1- P$.
The group of units on $U$ is $U^\ast = k^\ast \times \langle
x-\lambda_1\rangle \times \dotsm \times \langle x-\lambda_n\rangle$.
By Kummer Theory, $\HH^1(U,\mathbb Z/2)$ is isomorphic to
$U^\ast/U^{\ast 2}$.  The counterpart of diagram \eqref{eq:140} for
the Galois cover of affine curves $\pi : V \rightarrow U$ is
\begin{equation}
  \label{eq:142}
  \begin{CD}
    @.   \HH^1(U,\mathbb Z/2) \\
    @.   @VV{\pi^\ast}V  \\
    \HH^1(X,\mathbb Z/2) @>{\beta}>> \HH^1(V,\mathbb Z/2)
  \end{CD}
\end{equation}
The image of $\pi^\ast$ has $\mathbb Z/2$-rank $n-1$ and is contained
in the image of $\beta$, which also has $\mathbb Z/2$-rank $n-1$. This
proves that the image of $\pi^\ast$ is equal to the image of $\beta$.
This proves that the quadratic Galois extensions of $X$ are all of the
form
\begin{equation}
  \label{eq:143}
  \mathcal O(X) \otimes_{k[x]} k[x]/(y^2 = (x-\lambda_1)^{e_1}
  \dotsm (x-\lambda_n)^{e_n})
\end{equation}
where each $e_i$ is $0$ or $1$. The quadratic extension in
\eqref{eq:143} is split if and only if $e_1 = e_2 = \dots = e_n$.
 
Case 2: $n$ is odd.  Then $Y - X = Q_0$ is a single point, and $\pi$
ramifies at the points $Q_0, Q_1, \dots, Q_n$.  The genus of $Y$ is
$(n-1)/2$. The divisor classes $Q_1 - Q_0$, \dots, $Q_{n-1} - Q_0$
form a $\mathbb Z/2$-basis for ${_2\Pic(Y)}$.  The Gysin sequence
\cite[Remark~VI.5.4]{M:EC}
\begin{equation}
  \label{eq:148}
  0 \rightarrow \HH^1(Y,  \mathbb Z/2)
  \rightarrow \HH^1(X,  \mathbb Z/2)
  \rightarrow
  \HH^2_{Q_0}(Y,  \mathbb Z/2)
  \rightarrow
  \HH^2(Y,  \mathbb Z/2)
  \rightarrow 0
\end{equation}
shows that
\begin{equation}
  \label{eq:147}
  \HH^1(Y, \mathbb Z/2) \cong
  \HH^1(X, \mathbb  Z/2) \cong (\mathbb Z/2)^{(n-1)}\ldotp   
\end{equation}
Therefore, $Q_1$, \dots, $Q_{n-1}$ form a $\mathbb Z/2$-basis for
$\HH^1(X, \mathbb Z/2) = {_2\Pic(X)}$.

\begin{proposition}
  \label{prop:3.1}
  As above, let $n \geq 3$ and $X$ the affine hyperelliptic curve
  defined by $y^2 = \prod_{i=1}^n (x - \lambda_i)$. Let $\sigma : X
  \rightarrow X$ be the automorphism defined by $y \mapsto -y$. Let $G
  = \langle \sigma\rangle$.  Then for all $i \geq 0$,
  \begin{enumerate}[(a)]
  \item $\HH^{2i+1}(G, \Pic{X}) = 0$ and
  \item $\HH^{2i}(G, \Pic{X}) = \left(\Pic{X}\right)^G = {_2\Pic{X}}
    \cong (\mathbb Z/2)^{(r)}$.  If $n$ is odd, then $r$ is equal to
    $n-1$. If $n$ is even, then $r$ is either $n-1$ or $n-2$.
  \end{enumerate}
\end{proposition}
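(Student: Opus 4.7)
The plan is to first prove that $\sigma$ acts on $\Pic X$ by inversion, $[D] \mapsto -[D]$, and then deduce both (a) and (b) from the cyclic group cohomology together with the $2$-divisibility and $2$-torsion structure of $\Pic X$.

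To see that $\sigma$ acts as $-1$ on $\Pic X$, fix a closed point $Q$ of $X$. If $Q$ lies outside the ramification locus $\{Q_1, \dots, Q_n\}$, then $\pi$ is unramified at $Q$ and the pullback divisor $\pi^*(\pi Q)$ equals $Q + \sigma Q$; if $Q = Q_i$, then $\pi^*(\pi Q) = 2Q_i$ and $\sigma(Q_i) = Q_i$. Since $\Pic(\mathbb A^1) = 0$, every divisor on $\mathbb A^1$ is principal, hence $\pi^*(\pi Q)$ is principal on $X$. Therefore $[\sigma Q] = -[Q]$ in $\Pic X$ when $Q$ is unramified, while each $[Q_i]$ is $2$-torsion with $[\sigma Q_i] = [Q_i] = -[Q_i]$. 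Extending linearly, $\sigma^* = -1$ on $\Pic X$. With this action the operators $1 - \sigma$ and $1 + \sigma$ become multiplication by $2$ and $0$ respectively, and the standard Tate cohomology of $G = \mathbb Z/2$ yields $\HH^{2i}(G, \Pic X) = (\Pic X)^G = {_2\Pic X}$ and $\HH^{2i+1}(G, \Pic X) = \Pic X / 2\Pic X$ for every $i \geq 0$.

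For part (a), I would show that $\Pic X$ is $2$-divisible. The localization sequence
\[
\bigoplus_{Q \in Y - X} \mathbb Z \to \Pic Y \to \Pic X \to 0,
\]
combined with the splitting $\Pic Y \cong \Pic^0 Y \oplus \mathbb Z$ via any degree-one point of $Y - X$, exhibits $\Pic X$ as a quotient of $\Pic^0 Y$: in Case~2 this gives $\Pic X \cong \Pic^0 Y$, while in Case~1 one obtains $\Pic X \cong \Pic^0 Y / \langle \xi \rangle$ with $\xi = [Q_{01} - Q_{02}]$. Since $k$ is algebraically closed of characteristic different from $2$, $\Pic^0 Y$ is $2$-divisible, and hence so is $\Pic X$. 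This gives $\Pic X / 2\Pic X = 0$, which is (a).

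For part (b), apply the Kummer sequence
\[
0 \to \mathcal O^*(X)/\mathcal O^*(X)^2 \to \HH^1(X, \mu_2) \to {_2\Pic X} \to 0.
\]
By \eqref{eq:146} and \eqref{eq:147}, the middle term has $\mathbb Z/2$-rank $n - 1$ in both cases, so it remains to determine $\mathcal O^*(X)$. A unit on $X$ is a rational function on $Y$ whose divisor is supported on $Y - X$. In Case~2 the only degree-zero divisor supported on the single point $Q_0$ is the zero divisor, so $\mathcal O^*(X) = k^*$ and $r = n-1$. In Case~1 such a divisor has the form $a(Q_{01} - Q_{02})$ with $a \in \mathbb Z$, and is principal precisely when $a\xi = 0$ in $\Pic^0 Y$; consequently $\mathcal O^*(X)/k^*$ is either trivial (when $\xi$ has infinite order) or infinite cyclic (otherwise), so $r$ is $n-1$ or $n-2$ accordingly. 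The main obstacle is this dichotomy in Case~1: both values of $r$ actually occur depending on the arithmetic of the particular curve $Y$, which is why the proposition only commits to the bound $r \in \{n-1, n-2\}$.
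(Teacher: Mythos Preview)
Your proof is correct and follows the same architecture as the paper: show that $\sigma$ acts as $-1$ on $\Pic X$, then read off the Tate cohomology of the cyclic group $G$. The justifications diverge in two places. For part~(a), the paper obtains $\Pic X\otimes\mathbb Z/2=0$ by embedding it, via Kummer theory, into $\HH^2(X,\mu_2)$ and invoking the vanishing of \'etale $\HH^2$ for an affine curve; you instead realize $\Pic X$ as a quotient of $\Pic^0 Y$ through the localization sequence and use the $2$-divisibility of the Jacobian. Both arguments are short, but yours is more geometric and makes the structure of $\Pic X$ explicit. For part~(b), the paper simply cites external lemmas for the rank $r$, whereas your argument via the Kummer sequence \eqref{eq:102} together with the computations \eqref{eq:146}, \eqref{eq:147} and the direct analysis of $\mathcal O^\ast(X)$ is self-contained; it also pinpoints the dichotomy in the even case, namely that $r=n-2$ exactly when the class $[Q_{01}-Q_{02}]$ is torsion in $\Pic^0 Y$ and $r=n-1$ otherwise.
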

\begin{proof}
  Every prime divisor on $X$ is mapped by $\sigma$ to its inverse.
  Since $X$ is not rational, the only divisors that are fixed by $G$
  are the elements of order two in $\Pic{X}$.  That is,
  $\left(\Pic{X}\right)^G = {_2\Pic{X}}$.  Following the notation of
  \cite[pp.  296--297]{R:IHA}, let $N = \sigma +1$ and $D = \sigma
  -1$.  It follows that $N : \Pic{X} \rightarrow \Pic{X}$ is the zero
  map and $D : \Pic{X} \rightarrow \Pic{X}$ is multiplication by $-2$.
  If $_N\Pic{X}$ denotes the kernel of $N$, then by
  \cite[Theorem~10.35]{R:IHA} we have
  \[
  \HH^{2i}(G,\Pic{X}) = \frac{\left(\Pic{X}\right)^G}{N \Pic{X}} =
  \left(\Pic{X}\right)^G,
  \]
  the first part of (b), and
  \[
  \HH^{2i+1}(G,\Pic{X}) = \frac{_N\Pic{X}}{D \Pic{X}} = \Pic{X}\otimes
  \mathbb Z/2\ldotp
  \]
  By Kummer Theory, $\Pic{X}\otimes \mathbb Z/2 \rightarrow \HH^2(X,
  \mu_2)$ is one-to-one.  Since $X$ is an affine curve, $\HH^2(X,
  \mu_2)=0$, which proves (a).  The rest of (b) follows from
  \cite[Lemmas~3.1, 3.2]{F:gouoav}.
\end{proof}


\def\cfudot#1{\ifmmode\setbox7\hbox{$\accent"5E#1$}\else
  \setbox7\hbox{\accent"5E#1}\penalty 10000\relax\fi\raise 1\ht7
  \hbox{\raise.1ex\hbox to 1\wd7{\hss.\hss}}\penalty 10000 \hskip-1\wd7\penalty
  10000\box7}
\providecommand{\bysame}{\leavevmode\hbox to3em{\hrulefill}\thinspace}
\providecommand{\MR}{\relax\ifhmode\unskip\space\fi MR }
\providecommand{\MRhref}[2]{%
  \href{http://www.ams.org/mathscinet-getitem?mr=#1}{#2}
}
\providecommand{\href}[2]{#2}

\end{document}